\newtheorem{theorem}{Theorem}[section]
\newtheorem*{theorem*}{Theorem}
\newtheorem{lemma}{Lemma}[section]
\newtheorem{prop}{Proposition}[section]
\newtheorem{definition}{Definition}[section]
\DeclareMathOperator*{\argmin}{argmin}
\DeclareMathAccent{\mpetito}{\mathalpha}{operators}{23}
\newcommand{\dx}{\,{\rm d}x}
\newcommand{\dy}{\,{\rm d}y}
\newcommand{\ds}{\,{\rm d}s}
\newcommand{\C}{\mathscr C}
\newcommand{\forallt}{\qquad\text{for all }}
\newcommand{\fct}[4]{\arraycolsep=1.4pt\begin{array}{rcl}{#1}&\longrightarrow&{#2}\\{#3}&\longmapsto&{#4}\end{array}}
\newcommand{\varGammae}{{\varGamma_{\!e}}}
\providecommand{\keywords}[1]
{
  \small	
  \textbf{\textit{Keywords---}} #1
}
\title{Invertibility criteria for the biharmonic  single-layer potential}
\begin{document}
%-------------------------------------------------------------------------------------------
%
\date{\today}
\author{Alexandre Munnier\footnote{alexandre.munnier@univ-lorraine.fr}}

\affil{Université de Lorraine, CNRS, Inria, IECL, F-54000 Nancy, France}
\maketitle
\begin{abstract}
While the single-layer operator for the Laplacian is well understood, questions remain concerning the single-layer operator for the Bilaplacian, particularly with regard to invertibility issues linked with degenerate scales. In this article, we provide
simple sufficient conditions ensuring this invertibility 
for a wide range of problems.
\end{abstract}
\noindent\keywords{Biharmonic single-layer potential, biharmonic equation, degenerate scales.}

%\noindent\MSC{35C15 , 35D30, 45A05  }

%====================================================
\section{Introduction}
\label{sec:1}
Let $\varGamma$ be a smooth curve in the plane (we don't intend to be rigorous at this stage). For 
any density $q\in H^{-1/2}(\varGamma)$, the harmonic single-layer potential is defined by:
$$S_\varGamma q(x)=\int_\varGamma g_0(x-y)q(y)\ds(y)\forallt x\in\mathbb R^2,$$
where $g_0$ is the fundamental solution of the Laplacian, that reads ($\kappa$ is a positive parameter):
$$g_0(x)=-\frac{1}{2\pi}\ln \frac{|x|}{\kappa}\forallt x\in\mathbb R^2\setminus\{0\}.$$
The operator $S_\varGamma$ is bounded from $H^{-1/2}(\varGamma)$ into $H^1_{\ell oc}(\mathbb R^2)$ and so is the operator:
$$\fct{V_\varGamma:H^{-1/2}(\varGamma)}{H^{1/2}(\varGamma)}{q}{\gamma_\varGamma^D \circ S_\varGamma q,}$$
where $\gamma_\varGamma^D$ stands for the usual Dirichlet trace operator on $\varGamma$. It is well known that $V_\varGamma$ is 
invertible if and only if $\kappa\neq {\rm Cap}_\varGamma$, where  ${\rm Cap}_\varGamma$ is a constant 
called the logarithmic capacity of $\varGamma$. Furthermore, according to \cite[Theorem 8.16]{McLean:2000aa}, if $\kappa>{\rm Cap}_\varGamma$, the operator $S_\varGamma$ is positive definite on $H^{-1/2}(\varGamma)$ and it has one negative eigenvalue if $\kappa 
<{\rm Cap}_\varGamma$. The value of ${\rm Cap}_\varGamma$ is difficult to evaluate in  general  (it is explicitly known only for certain geometries such as a disk, an ellipse, a square...). However, one can use the following simple estimate:  ${\rm Cap}_\varGamma\leqslant R$ where 
$R>0$ is the radius 
of any circle that enclosed $\varGamma$. The successful implementation of a BEM is therefore guaranteed by the verification of the criterion: 
$\kappa>R$. This criterion is elementary and applies to a wide range of problems. To our knowledge, no such simple criterion is available for the biharmonic single-layer potential. 
\par
\medskip
In this paper we consider Jordan curves of class $\mathcal C^{1,1}$ in the plane (see the last section for possible generalizations). 
We denote by $\varGamma$ 
a disjoint union 
of a finite number of such curves.
We define 
$\varOmega_\varGamma^-$ the bounded  domain consisting of the points enclosed by at least one curve and  $\varOmega^+_\varGamma$
its unbounded complement (see Fig.~\ref{fig0}). 
 The multi-connected curve $\varGamma$ can be decomposed into $\varGamma_{\!e}$, the boundary shared by $\varOmega_\varGamma^+$
 and $\varOmega_\varGamma^-$, and the Jordan curves included in $\varOmega_\varGamma^-$. On every Jordan curve, we define $n$ the unit normal vector field directed towards the bounded domain enclosed by the curve (and we will stick to this convention throughout the paper).
%-------------------------------------------------------------------------------------------
%-------------------------------------------------------------------------------------------
\begin{figure}[h]
\centerline{\input{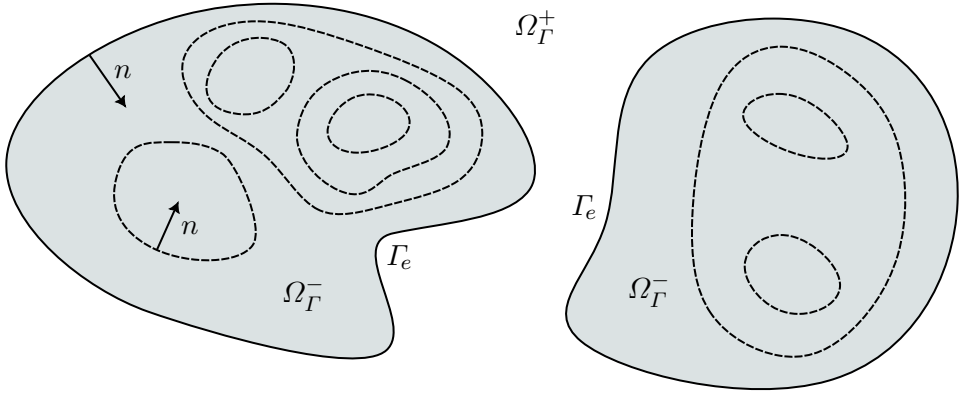}}
\caption{\label{fig0}The multi-connected curve $\varGamma$ can be decomposed into the disjoint union of 
$\varGammae$ and the Jordan curves included in $\varOmega_\varGamma^-$.}
\end{figure}
%====================================================
\par
%-------------------------------------------------------------------------------------------
For every function $u\in H^2_{\ell oc}(\mathbb R^2)$, we can define the Dirichlet and Neumann traces on $\varGamma$ denoted respectively 
by $\gamma_\varGamma^D u$ 
and $\gamma_\varGamma^N u$ (the Neumann trace is defined taking into account the orientation of $n$). The total trace operator is next 
given by:
$$\fct{\gamma_\varGamma:H^2_{\ell oc}(\mathbb R^2)}{H^{3/2}(\varGamma)\times H^{1/2}(\varGamma)}
{u}{\big(\gamma_\varGamma^D u,\gamma_\varGamma^N u\big).}$$
We will use the following expression for the fundamental solution of the Bilaplacian:
\begin{equation}
\label{eq:def_G}
G_0(x)=\frac{1}{8\pi}\Big[|x|^2\ln\frac{|x|}{\kappa_0}+\kappa_1\Big]\forallt x\in\mathbb R^2,
\end{equation}
where the parameters $(\kappa_0,\kappa_1)\in\,]0,+\infty[\times\mathbb R$ are introduced to cover all the classical definitions available in 
the literature (see for instance \cite{Christiansen:1998aa, Constanda:1997aa, Shigeta:2011aa}).
We denote $H(\varGamma)=H^{3/2}(\varGamma)\times H^{1/2}(\varGamma)$ and $H'(\varGamma)=
H^{-3/2}(\varGamma)\times H^{-1/2}(\varGamma)$. Using the usual abuse of notation to identify $G_0(x-y)$ with 
a two-variables function $G_0(x,y)$, the biharmonic single-layer potential  is defined for every $q=(q_0,q_1)\in H'(\varGamma)$ by:
\begin{equation}
\label{eq:def_Sq}
\mathscr S_\varGamma q(x)=\int_\varGamma G_0 (x,y)q_0(y)+\partial_{n(y)}G_0(x,y)q_1(y)\,{\rm d}s(y)\forallt x\in\mathbb R^2.
\end{equation}
The operator $\mathscr S_\varGamma:H'(\varGamma)\longrightarrow H^2_{\ell oc}(\mathbb R^2)$ is bounded
so the same conclusion applies to the operator:
\begin{equation}
\label{def_Vgamma}
\fct{V_\varGamma:H'(\varGamma)}{H(\varGamma)}
{q}{\gamma_\varGamma\circ\mathscr S_\varGamma q.}
\end{equation}
Our goal is to determine conditions that ensure the invertibility of $V_\varGamma$.
Even in the simplest case where $\varGamma$ reduces to a single Jordan curve, the answer is not clear in general. Indeed, for any fixed parameters $\kappa_0$, $\kappa_1$ (in identity \eqref{eq:def_G}), it is known that 
there exist degenerate scales 
$\rho>0$ for which $V_{\rho\varGamma}$ is not invertible. Since the pioneering work \cite{Costabel:1996aa}, studies on degenerate scales (and more generally on the invertibility of $V_\varGamma$) 
have been reduced to questions of invertibility of a $4\times 4$ matrix  (known as the discriminant matrix). In this paper, we show that this matrix
 can be replaced by a simpler one, a $3\times 3$ matrix called the Robin matrix (because of the similarity of its role to that of the Robin constant in potential theory). 
Recalling that 
the operator $V_\varGamma$ (from $H'(\varGamma)$ into itself) is self-adjoint,
our main results are as follows:
%-------------------------------------------------------------------------------------------
\begin{theorem}
\label{main_theo1}
\begin{enumerate}
\item The invertibility of $V_\varGamma$ depends on $\varGammae$ only.
\label{p1}
\item To any multi-connected curve $\varGamma$ (as described above) we can associate a $3\times 3$ symmetric matrix $\Lambda_{\varGammae}$ (the Robin matrix). 
The operator $V_\varGamma$ is an isomorphism if and only if 
$\det \Lambda_\varGammae\neq 0$.
\label{p2}
\item Let $R>0$ be the radius of a circle $\mathcal C_R$ such 
that $\varOmega_{\mathcal C_R}^+\subset \varOmega_\varGamma^+$ (i.e. the circle $\mathcal C_R$ enclosed $\varGamma$). If  $\kappa_0>eR$ and $\kappa_1>(\kappa_0/e)^2$ in the definition \eqref{eq:def_G}, then the 
matrix $\Lambda_{\varGammae}$ is positive definite and the
operator $V_\varGamma$ is strongly elliptic on $H'(\varGamma)$.
\label{p3}
\item
Let $R>0$ be the radius of a circle $\mathcal C_R$ such 
that $\varOmega_\varGamma^+\subset \varOmega_{\mathcal C_R}^+$. If
  $\kappa_0<eR$ and $\kappa_1<(\kappa_0/e)^2$  in the definition \eqref{eq:def_G}, 
then the matrix $\Lambda_{\varGammae}$ is negative definite.
\label{p4}
\item Let $\kappa_0=1$ and $\kappa_1=0$ and let $\mathcal C_{R^-}$ 
and  $\mathcal C_{R^+}$ be  circles of radii $R^-$ and $R^+$ such that $\varOmega_{\mathcal C_{R^+}}^+\subset 
\varOmega_{\varGamma}^+\subset \varOmega_{\mathcal C_{R^-}}^+$. Degenerate scales $\rho$ for $\varGamma$ 
can occur only when $1/(eR^+)<\rho<1/(eR^-)$.
\label{p5}
\end{enumerate}
\end{theorem}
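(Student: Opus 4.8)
The plan is to turn the statement into a question about the sign of $\det\Lambda_\varGammae$ along the half-line $\kappa_1=0$, via exact scaling. First I would record the homogeneity of the kernel: a one-line computation from \eqref{eq:def_G} gives $G_0(\rho z;1,0)=\rho^2\,G_0(z;1/\rho,0)$ for every $\rho>0$. Since the traces $\gamma_\varGamma^D,\gamma_\varGamma^N$ and the $H(\varGamma)$--$H'(\varGamma)$ pairing rescale under $x\mapsto\rho x$ by fixed powers of $\rho$, this identity shows that $V_{\rho\varGamma}$ built from $(\kappa_0,\kappa_1)=(1,0)$ is an isomorphism iff $V_\varGamma$ built from $(1/\rho,0)$ is. By parts \eqref{p1}--\eqref{p2} the latter fails exactly when $\det\Lambda_\varGammae(1/\rho,0)=0$. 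Writing $\kappa_0=1/\rho$, the theorem becomes: every zero of $\kappa_0\mapsto\det\Lambda_\varGammae(\kappa_0,0)$ lies in $(eR^-,eR^+)$.

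The lower threshold is immediate from part~\eqref{p4}. For $\kappa_0<eR^-$ the enclosed circle $\mathcal C_{R^-}$ satisfies its hypotheses — indeed $\kappa_1=0<(\kappa_0/e)^2$ is automatic — so $\Lambda_\varGammae(\kappa_0,0)$ is negative definite and a fortiori $\det\Lambda_\varGammae(\kappa_0,0)\neq0$. Hence no degenerate scale has $\kappa_0\le eR^-$, i.e.\ $\rho\ge 1/(eR^-)$, the right endpoint of the interval.

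The upper threshold is the delicate side, and here part~\eqref{p3} is \emph{unavailable}: with $\kappa_1=0$ the requirement $\kappa_1>(\kappa_0/e)^2$ can never hold. I would first exploit the way $\kappa_1$ enters: in \eqref{eq:def_G} it is a pure additive constant $\kappa_1/(8\pi)$, so in \eqref{eq:def_Sq} it merely shifts $\mathscr S_\varGamma q$ by $\tfrac{\kappa_1}{8\pi}\int_\varGamma q_0\,\mathrm ds$; thus $\Lambda_\varGammae$ depends on $\kappa_1$ through a single rank-one (constant-mode) term and $\kappa_1\mapsto\det\Lambda_\varGammae(\kappa_0,\kappa_1)=a(\kappa_0)+\kappa_1\,b(\kappa_0)$ is affine. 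Fix $\kappa_0>eR^+$. Part~\eqref{p3} (with the enclosing circle $\mathcal C_{R^+}$) forces $\det\Lambda_\varGammae(\kappa_0,\kappa_1)>0$ for all $\kappa_1>(\kappa_0/e)^2$; an affine function positive on a right half-line has $b(\kappa_0)\ge0$, and if $b(\kappa_0)=0$ then $\det\Lambda_\varGammae(\kappa_0,0)=a(\kappa_0)>0$ and we are done. In the remaining case $b(\kappa_0)>0$ the unique root in $\kappa_1$ is $\kappa_1^\ast(\kappa_0)=-a(\kappa_0)/b(\kappa_0)\le(\kappa_0/e)^2$, and $\det\Lambda_\varGammae(\kappa_0,0)=0$ precisely when $\kappa_1^\ast(\kappa_0)=0$. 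The entire difficulty thus collapses to a single point: the degeneracy curve $\kappa_1=\kappa_1^\ast(\kappa_0)$ must not meet the axis $\kappa_1=0$ for any $\kappa_0\ge eR^+$.

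To close this I would compare $\varGamma$ with its enclosing circle. For a circle of radius $R$ one computes $\Lambda_{\mathcal C_R}$ explicitly; by the same scaling its constant-mode degeneracy locus is $\kappa_1^\ast_{\mathcal C_R}(\kappa_0)=R^2(2s^2-2s+1)$ with $s=\ln(\kappa_0/R)$ — everywhere strictly positive (minimum $R^2/2$) and \emph{tangent} to the parabola $(\kappa_0/e)^2$ exactly at $\kappa_0=eR$ — so $\Lambda_{\mathcal C_R}(\kappa_0,0)$ is nonsingular for all $\kappa_0$, consistent with the interval $(eR^-,eR^+)$ collapsing to a point when $\varGamma$ is a circle. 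The plan is then to transfer this positivity to $\varGamma$ through $\varOmega_{\mathcal C_{R^-}}^-\subset\varOmega_\varGamma^-\subset\varOmega_{\mathcal C_{R^+}}^-$ by a monotonicity principle for $\Lambda_\varGammae$ under curve enclosure — the biharmonic analogue of the monotonicity of logarithmic capacity that gives $\mathrm{Cap}_\varGamma\le R$ in the harmonic case; equivalently, by a direct injectivity argument in which $V_\varGamma q=0$ makes $u=\mathscr S_\varGamma q$ vanish in $\varOmega_\varGamma^-$ and become an exterior biharmonic field whose far-field carries the factor $\ln|x|-\ln\kappa_0$, so that matching it against the zero Cauchy data on $\varGamma\subset\varOmega_{\mathcal C_{R^+}}^-$ with $\kappa_0>eR^+$ forces $q=0$. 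The main obstacle is exactly this transfer: part~\eqref{p3} yields the threshold $eR^+$ only jointly with $\kappa_1>(\kappa_0/e)^2$, and at $\kappa_1=0$ no definiteness of $\Lambda_\varGammae$ survives (already $\Lambda_{\mathcal C_{R^+}}(\kappa_0,0)$ is indefinite though nonsingular). The crux is therefore to convert the purely geometric threshold $eR^+$ into an unconditional \emph{nonsingularity} at $\kappa_1=0$ — i.e.\ to establish the domain-monotonicity (or the far-field sign) that forbids the degeneracy curve from reaching the axis beyond $\kappa_0=eR^+$.
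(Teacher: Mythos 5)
There is a genuine gap, and you have in fact named it yourself: the upper threshold $\rho\le 1/(eR^+)$ is never established. Your affine-in-$\kappa_1$ argument only yields that the root $\kappa_1^\ast(\kappa_0)$ of $\kappa_1\mapsto\det\Lambda_\varGammae(\kappa_0,\kappa_1)$ satisfies $\kappa_1^\ast(\kappa_0)\leqslant(\kappa_0/e)^2$, which is perfectly compatible with $\kappa_1^\ast(\kappa_0)=0$, so nothing forbids a degenerate scale with $\kappa_0=1/\rho>eR^+$. The missing ingredient is the paper's Theorem~\ref{the:2}: the \emph{ordered eigenvalues} of $\Lambda_{\varGammae}$ are monotone under inclusion of $\varOmega_\varGamma^+$ (proved by writing $\xi\cdot\Lambda_\varGamma\xi=[F,\gamma_\varGamma F]_\varGamma$ with $F=\xi\cdot G$, using the variational characterization \eqref{def_SP} of $\mathsf S_\varGamma$ to show $[F,\gamma_{\varGamma'}F]_{\varGamma'}\geqslant[F,\gamma_\varGamma F]_\varGamma$ when $\varOmega_\varGamma^+\subset\varOmega_{\varGamma'}^+$, then Courant--Fischer). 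This is strictly stronger than the definiteness statements in parts~\ref{p3}--\ref{p4} and is exactly what closes the case you could not: for $\rho<1/(eR^+)$ one has, with $\kappa_0=1,\kappa_1=0$, $\lambda^j_{\mathcal C_{\rho R^+}}\leqslant\lambda^j_{(\rho\varGamma)_e}\leqslant\lambda^j_{\mathcal C_{\rho R^-}}$; by Proposition~\ref{prop:1} the outer circle contributes two positive eigenvalues $\lambda_1=\lambda_2=-\tfrac{1}{4\pi}(\ln(\rho R^+)+1)>0$ while the inner circle contributes $\lambda_0(\rho R^-)=-\tfrac{(\rho R^-)^2}{8\pi}(2\ln^2(\rho R^-)+2\ln(\rho R^-)+1)<0$, so the inertia of $\Lambda_{(\rho\varGamma)_e}$ is pinned at one negative and two positive eigenvalues and the determinant cannot vanish. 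Definiteness indeed fails on the axis $\kappa_1=0$, as you observed, but inertia control does not, and that is the whole point.

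Two further remarks. First, your side claim that $\Lambda_{\mathcal C_R}(\kappa_0,0)$ is nonsingular for all $\kappa_0$ is false: it is singular at $\kappa_0=eR$, where $\lambda_1=\lambda_2=0$. You tracked only the constant mode $\lambda_0$, but the degeneracy governing the thresholds $eR^\pm$ lives in the $x_1,x_2$ modes; this is precisely why an argument centred on the $(0,0)$ entry and the rank-one dependence on $\kappa_1$ cannot see the relevant mechanism. Second, your proposal takes parts~\ref{p1}--\ref{p4} as given, whereas the paper's proof of the theorem is exactly the assembly of Theorems~\ref{main:theo2}, \ref{main:3}, \ref{the:2} and Proposition~\ref{prop:1}; parts~\ref{p3} and \ref{p4} themselves already require the eigenvalue monotonicity plus the explicit circle computation, so the tool you are missing for part~\ref{p5} is also the one underlying the statements you are quoting. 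The scaling reduction $G_0(\rho z;1,0)=\rho^2G_0(z;1/\rho,0)$ and the lower threshold via negative definiteness from the enclosed circle are correct and consistent with the paper.
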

%====================================================
Points~\ref{p3} and \ref{p4} of Theorems~\ref{main_theo1} provide criteria (similar to those for the harmonic single-layer potential) ensuring the successful implementation of a BEM. These criteria 
are simple and apply to most of the problems that can be encountered. In particular, we emphasize that they apply to the case where 
$\varGammae$ is multi-connected, this case being the blind spot of most publications on the subject.
\par
Some authors focus on the particular case where $\kappa_0=1$ and $\kappa_1=0$. The most advanced results concerning 
degenerate scales in this case can be found in the recent paper \cite{Corfdir:2022aa} (to which we refer also for a detailed
overview  of the known results on this topic). The authors look for sufficient conditions 
to prevent the appearance of degenerate scales. They prove that when $\varGamma$ is a single Jordan curve such that the 
domain $\varOmega_\varGamma^-$ is star-shaped, or has symmetry properties, $V_\varGamma$ 
is invertible provided that either a circle of radius $1/e$ is included in $\varOmega_\varGamma^-$ or that 
$\varOmega_\varGamma^-$ is included in  such a circle. They claim that when $\varGammae$ is not connected, 
no general conclusion can be drawn. Points~\ref{p3}, \ref{p4}  and \ref{p5} of Theorem~\ref{main_theo1} above
extend their results to the general case considered in this article, and thus removes their geometric restrictions. 
In the same paper  \cite{Corfdir:2022aa}, it is showed that when $\varGammae$ is a single 
Jordan curve, ``holes'' have no influence on the degenerate scales. The first assertion of Theorem~\ref{main_theo1} also extends 
this result, in particular to the case where $\varGammae$ is multi-connected.
 \par
 The proof of Theorem~\ref{main_theo1} results straightforwardly from the combination of Theorems~\ref{main:theo2}, \ref{main:3}, \ref{the:2} 
 and Property~\ref{prop:1} established below.
%====================================================
\section{The biharmonic transmission problem}
We continue to use the notation $\varGamma$ to designate a disjoint union of Jordan curves as described in the previous section.
Following an idea developed in the article \cite{Amrouche:1994aa}, we introduce the weight functions:
$$\rho(x)=\sqrt{1+|x|^2}\qquad\text{and}\qquad {\rm lg}(x)=\ln(2+|x|^2)\forallt x\in\mathbb R^2,$$
and the weighted Sobolev space:
$$W^2(\mathbb R^2)=\Big\{u\in \mathscr D'(\mathbb R^2)\,:\, \frac{u}{\rho^2\,{\rm lg}}\in L^2(\mathbb R^2),\, \frac{1}{\rho\,{\rm lg}}\frac{\partial u}{\partial x_j}\in L^2(\mathbb R^2)~\text{ and }~\frac{\partial^2 u}{\partial x_j\partial x_k}\in L^2(\mathbb R^2),\,\forall\,j,k=1,2\Big\}.$$
Note that the three-dimensional space of affine functions is  a subspace of $W^2(\mathbb R^2)$.  
The affine functions will play a particular role in the analysis (the same role as played by the constants for the harmonic 
single-layer  potential). 
%-------------------------------------------------------------------------------------------
%
%The space $\gamma^D_\varGamma \mathscr A$ is a three-dimensional subspace of $L^2(\varGamma)$. We introduce $\{a_1,a_2,a_3\}$ 
%an orthonormal basis of this space and 
We provide the space $W^2(\mathbb R^2)$ with the inner product:
\begin{equation}
\label{p_scal}
(u,v)_{W^2(\mathbb R^2)}=(\Delta u,\Delta v)_{L^2(\mathbb R^2)}+\int_\varGamma u\,v\ds\forallt u,v\in W^2(\mathbb R^2).
%\sum_{j=1}^3(a_j,\gamma^D_\varGamma u\big)_{L^2(\varGamma)}\big(a_j,\gamma^D_\varGamma v\big)_{L^2(\varGamma)}\forallt u,v\in W^2(\mathbb R^2).
\end{equation}
According to \cite{Amrouche:1994aa}, the norm associated to this scalar product is equivalent to the natural norm of $W^2(\mathbb R^2)$. 
We introduce the subspace:
$$W^2_\varGamma(\mathbb R^2)=\Big\{u\in W^2(\mathbb R^2)\,:\,\gamma_\varGamma  u=0\Big\},$$
and for every $p=(p_0,p_1)\in H(\varGamma)$, we define: 
\begin{equation}
\label{def_SP}
\mathsf S_\varGamma p=\argmin\Big\{\|u\|_{W^2(\mathbb R^2)}\,:\,u\in W^2(\mathbb R^2),\,\gamma_\varGamma u=p\Big\}.
\end{equation}
Being the orthogonal projection of the origin onto the affine space $\mathcal W(p)=\big\{u\in W^2(\mathbb R^2)\,:\gamma_\varGamma u=p\big\}$ (which 
is closed an convex) in the Hilbert space $W^2(\mathbb R^2)$, the minimum is indeed achieved and unique. 
\par
For any function $u$ defined in $\mathbb R^2$ we denote by $u^+$  and $u^-$ its restrictions to the domains $\varOmega_\varGamma^+$ and $\varOmega_\varGamma^-$ respectively. Some elementary properties of $\mathsf S_\varGamma p$ are gathered in the following lemma:
%-------------------------------------------------------------------------------------------
\begin{lemma}
\label{gtqpllkj}
\begin{enumerate}
\item The function $\mathsf S_\varGamma:H(\varGamma)\longrightarrow W_\varGamma^2(\mathbb R^2)^\perp$ is an isomorphism, 
its inverse being $\gamma_\varGamma:W_\varGamma^2(\mathbb R^2)^\perp\longrightarrow H(\varGamma)$.
\item The function $\mathsf S_\varGamma p$ is the unique (weak) solution in the space $W^2(\mathbb R^2)$ to the 
transmission problem:
\begin{subequations}
\label{uniq_trans}
\begin{empheq}[left=\empheqlbrace]{align}
\Delta^2 u &=0\quad\text{in }\mathbb R^2\setminus\varGamma\\
\gamma_\varGamma  u&=p.
\end{empheq}
\end{subequations}
%
%\item
%For every $u\in W^2(\mathbb R^2)$ and every $p\in H(\varGamma)$, $\big(\Delta u,\Delta\mathsf S_\varGamma p\big)_{L^2(\mathbb R^2)}=\big(\Delta (\mathsf S_\varGamma\circ \gamma_\varGamma u) ,\Delta\mathsf S_\varGamma p\big)_{L^2(\mathbb R^2)}$.
%
\item For every $u\in W^2_\varGamma(\mathbb R^2)^\perp$  and $v\in W^2(\mathbb R^2)$, $(\Delta u,\Delta v)_{L^2(\varOmega_\varGamma^+)}=\int_{\varGammae} (\partial_n \Delta u^+)v-(\Delta u^+)\partial_n v\ds$.
\item\label{frlopwwk1}
If $\mathscr C$ is another set of Jordan curves as described in Section~\ref{sec:1} such that $\varOmega_\C^+\subset 
\varOmega_\varGamma^+$, then for every $p\in H(\varGamma)$, 
$\mathsf S_{\C_e}\circ\gamma_{\C_e}\circ\mathsf S_\varGamma p=\mathsf S_\varGamma p$ in $\varOmega^+_{\C}$.
\item For every $p\in H(\varGamma)$, $\Delta\mathsf S_\varGamma p(x)=\mathscr O(1/|x|)$ as $|x|\longrightarrow +\infty$.
\end{enumerate}
\end{lemma}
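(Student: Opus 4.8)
The plan is to establish the five points in the logical order $1,2,5,3,4$, since the Green identity (third point) rests on the decay estimate (last point) and the biharmonicity (second point), while the consistency property (fourth point) rests on all of these. The first two points are pure Hilbert-space geometry. By \cite{Amrouche:1994aa} the inner product \eqref{p_scal} induces the natural norm, so $W^2(\mathbb R^2)$ is a Hilbert space, $\gamma_\varGamma$ is bounded and (trace theorem in this weighted setting) onto $H(\varGamma)$; hence $W^2_\varGamma(\mathbb R^2)=\ker\gamma_\varGamma$ is closed and $W^2(\mathbb R^2)=W^2_\varGamma(\mathbb R^2)\oplus W^2_\varGamma(\mathbb R^2)^\perp$. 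Writing $\mathcal W(p)=u_p+W^2_\varGamma(\mathbb R^2)$ for any lift $u_p$ of $p$, the minimiser $\mathsf S_\varGamma p$ is the orthogonal projection of the origin onto $\mathcal W(p)$; it therefore lies in $W^2_\varGamma(\mathbb R^2)^\perp$ and keeps the trace $p$, so $\gamma_\varGamma\circ\mathsf S_\varGamma=\mathrm{id}_{H(\varGamma)}$. Since $\gamma_\varGamma$ restricted to $W^2_\varGamma(\mathbb R^2)^\perp$ has trivial kernel ($W^2_\varGamma(\mathbb R^2)^\perp\cap W^2_\varGamma(\mathbb R^2)=\{0\}$) and is onto by the previous identity, the two maps are mutually inverse isomorphisms, giving the first point. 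For the second, orthogonality of $\mathsf S_\varGamma p$ to $W^2_\varGamma(\mathbb R^2)$ reads $(\mathsf S_\varGamma p,\varphi)_{W^2(\mathbb R^2)}=0$ whenever $\gamma_\varGamma\varphi=0$; the boundary term $\int_\varGamma(\mathsf S_\varGamma p)\varphi\ds$ in \eqref{p_scal} then drops (as $\varphi=0$ on $\varGamma$), leaving $(\Delta\mathsf S_\varGamma p,\Delta\varphi)_{L^2(\mathbb R^2)}=0$. Testing in particular against $\varphi\in\mathscr D(\mathbb R^2\setminus\varGamma)$ gives $\Delta^2\mathsf S_\varGamma p=0$ in $\mathbb R^2\setminus\varGamma$; uniqueness follows because the difference $w$ of two $W^2$ solutions lies in $W^2_\varGamma(\mathbb R^2)$, is biharmonic, so $\|\Delta w\|_{L^2}^2=0$ and then $\|w\|_{W^2(\mathbb R^2)}^2=\|\Delta w\|_{L^2}^2+\int_\varGamma w^2\ds=0$.

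For the decay estimate, observe that $\Delta\mathsf S_\varGamma p$ is harmonic in $\varOmega_\varGamma^+$ (since $\Delta^2\mathsf S_\varGamma p=0$ there) and lies in $L^2(\mathbb R^2)$ (all second derivatives are square integrable by the definition of $W^2(\mathbb R^2)$); expanding a function harmonic in the exterior of a disk in cylindrical harmonics and discarding the growing modes, the constant and the logarithm—all excluded by square integrability at infinity—leaves only the modes $|x|^{-k}$, $k\geqslant1$, whence $\Delta\mathsf S_\varGamma p(x)=\mathcal O(1/|x|)$. The third point is then the second Green (Rayleigh–Green) identity for $\Delta^2$ on $\varOmega_\varGamma^+\cap B_R$: using $\Delta^2 u=0$ in $\varOmega_\varGamma^+$ (second point) and the orientation of $n$ on $\varGammae$ it reads
\[
\int_{\varOmega_\varGamma^+\cap B_R}\Delta u\,\Delta v\dx=\int_{\varGammae}\big[(\partial_n\Delta u^+)\,v-(\Delta u^+)\,\partial_n v\big]\ds+\int_{\partial B_R}(\cdots)\ds,
\]
and it remains to let $R\to\infty$. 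The one genuinely delicate step—indeed the main obstacle of the whole lemma—is the vanishing of the circle-at-infinity term: combining the decay $\Delta u=\mathcal O(1/|x|)$ (so $\partial_n\Delta u=\mathcal O(1/|x|^2)$) with the square integrability of $\Delta u$ and $\Delta v$ and the weighted bounds defining $W^2(\mathbb R^2)$, one extracts a sequence $R_j\to\infty$ along which $\int_{\partial B_{R_j}}(\cdots)\ds\to0$, in the standard fashion of the weighted-space calculus of \cite{Amrouche:1994aa}.

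Finally, for the consistency property set $w=\mathsf S_\varGamma p$. Since $\varOmega_\C^+\subset\varOmega_\varGamma^+\subset\mathbb R^2\setminus\varGamma$, $w$ is biharmonic in $\varOmega_\C^+$ and $q:=\gamma_{\C_e}w$ is well defined; by the second point $\mathsf S_{\C_e}q$ is biharmonic off $\C_e$ with the same Cauchy data $q$ on $\C_e$. The difference $d=w-\mathsf S_{\C_e}q$ is biharmonic in $\varOmega_\C^+$ with $\gamma_{\C_e}d=0$, and both summands inherit the decay of the last point, so applying the Green identity of the third point on $\varOmega_\C^+$ to $u=v=d$ makes the $\C_e$-boundary term vanish (zero Cauchy data) and yields $\int_{\varOmega_\C^+}|\Delta d|^2\dx=0$. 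Hence $d$ is harmonic in $\varOmega_\C^+$ with vanishing Dirichlet and Neumann traces on the $\mathcal C^{1,1}$ curve $\C_e$; unique continuation then forces $d\equiv0$ in $\varOmega_\C^+$, which is exactly the stated identity.
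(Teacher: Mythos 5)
Your points 1, 2 and 5 follow the paper's route (orthogonal-projection characterization, testing against $\mathscr D(\mathbb R^2\setminus\varGamma)$, harmonicity of $\Delta\mathsf S_\varGamma p$ in the exterior), with one omission: in the uniqueness part of point 2 you assert that the difference $w$ of two weak solutions satisfies $\|\Delta w\|_{L^2}^2=0$ ``since $w$ is biharmonic'', but a weak solution only gives $(\Delta w,\Delta\varphi)_{L^2}=0$ for $\varphi\in\mathscr D(\mathbb R^2\setminus\varGamma)$; to take $\varphi=w$ you need the density of $\mathscr D(\mathbb R^2\setminus\varGamma)$ in $W^2_\varGamma(\mathbb R^2)$ (this is exactly \cite[Theorem 7.2]{Amrouche:1994aa}, which the paper invokes and you do not).

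For points 3 and 4 you take a genuinely different route (truncation on $\varOmega_\varGamma^+\cap B_R$, resp.\ an energy identity plus unique continuation, instead of the paper's density of $\mathscr D(\mathbb R^2)$ in $W^2(\mathbb R^2)$ and its observation that the glued difference lies in $W^2_{\C_e}\cap (W^2_{\C_e})^\perp$), and here there is a real gap: the vanishing of the circle term does \emph{not} follow from $\Delta u=\mathscr O(1/|x|)$ together with the weighted bounds on $v$. With those estimates alone, $(\partial_n\Delta u)\,v$ is of size $R^{-2}\cdot R^2\ln R$ on a circle of length $2\pi R$, and the best subsequence extraction leaves a factor $(\ln R_j)^{1/2}$; worse, for $v$ affine (which is in $W^2(\mathbb R^2)$ and is precisely the case that matters later) the circle integral converges to $-2\pi$ times the pairing of the $r^{-1}$-Fourier modes of $\Delta u$ with the linear part of $v$, which is nonzero for a generic harmonic function decaying like $1/r$. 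The step can be repaired from inside your own argument: in your proof of point 5 the mode $r^{-1}$ must also be discarded, since $\int^{\infty}r^{-2}\,r\,{\rm d}r$ diverges, so $L^2(\varOmega_\varGamma^+)$-integrability of the harmonic function $\Delta u$ actually forces $\Delta u=\mathscr O(1/|x|^2)$ and $\nabla\Delta u=\mathscr O(1/|x|^3)$ (your ``$k\geqslant 1$'' should read ``$k\geqslant 2$''); with this rate the circle term is $o(R_j^{-1}(\ln R_j)^{1/2})$ and your truncation argument closes, after which point 4 goes through (note you must also justify applying the Green identity to $d$, which is not a priori in $W^2_{\C}(\mathbb R^2)^\perp$ --- rerun the truncation for $d$ directly). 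The paper's density argument buys a cleaner proof that never sees the boundary at infinity; your route, once the decay rate is corrected, is more explicit about where the critical affine/$r^{-1}$ cancellation happens.
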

\begin{proof}%The proofs of statements 1 and 2 are straightforward. 
\begin{enumerate}
\item As already mentioned, for any $p\in H(\varGamma)$, $\mathsf S_\varGamma p$ is the orthogonal projection of the origin onto the 
affine subspace $\mathcal W(p)$ of $W^2(\mathbb R^2)$. It is well known that this orthogonal projection can be defined equivalently as the only function of  
$\mathcal W(p)$ such 
that $\big(\mathsf S_\varGamma p-0,u-v\big)_{W^2(\mathbb R^2)}=0$ for all $u,v\in \mathcal W(p)$, i.e. the only function 
such that
\begin{equation}
\label{frgpp}
\big(\mathsf S_\varGamma p,w\big)_{W^2(\mathbb R^2)}=0\forallt w\in W_\varGamma^2(\mathbb R^2),
\end{equation}
which  proves that the function  $\mathsf S_\varGamma$ is indeed valued in $W_\varGamma^2(\mathbb R^2)^\perp$. 
It is now straightforward to verify that its inverse is $\gamma_\varGamma:W_\varGamma^2(\mathbb R^2)^\perp\longrightarrow H(\varGamma)$.
\item Choosing $w\in\mathscr D(\mathbb R^2\setminus\varGamma)$ in \eqref{frgpp}, we deduce that $\Delta^2\mathsf S_\varGamma p=0$ 
in $\mathscr D'(\mathbb R^2\setminus\varGamma)$ for all $p\in H(\varGamma)$ and therefore that $\mathsf S_\varGamma p$ is a weak 
solution of \eqref{uniq_trans}. Reciprocally, any weak solution $u$ of \eqref{uniq_trans} satisfies $(u,w)_{W^2(\mathbb R^2)}=0$ 
for every $w\in\mathscr D(\mathbb R^2\setminus\varGamma)$. From \cite[Theorem 7.2]{Amrouche:1994aa} we deduce that 
$\mathscr D(\mathbb R^2\setminus\varGamma)$ is dense in $W^2_\varGamma(\mathbb R^2)$, which entails that $u$ satisfies 
\eqref{frgpp} and hence $u=\mathsf S_\varGamma p$.
\item Since $\Delta u^+$ is  a harmonic function in $L^2(\varOmega^+_\varGamma)$, 
it admits a Dirichlet and a Neumann trace in $H^{-1/2}(\varGammae)$ and $H^{-3/2}(\varGammae)$ respectively. The integration by parts formula results from the density of the space $\mathscr D(\mathbb R^2)$ 
in $W^2(\mathbb R^2)$ (asserted in \cite[Theorem 7.2]{Amrouche:1994aa}).
\item We get the result by noticing that the function equal to $\mathsf S_{\C_e}\circ\gamma_{\C_e}\circ\mathsf S_\varGamma p-
\mathsf S_\varGamma p$ in $\varOmega_{\C}^+$ and to $0$ in $\varOmega_{\C}^-$, is in  $W^2_{\C_e}(\mathbb R^2) \cap W^2_{\C_e}(\mathbb R^2)^\perp$. 
\item For every $p\in H(\varGamma)$ and every $x\in\varOmega_\varGamma^+$, the mean value property for harmonic functions 
asserts that:
$$\Delta \mathsf S_\varGamma p(x)=\frac{1}{\pi R_x}\int_{D(x,R_x)}\Delta  \mathsf S_\varGamma p(y)\dy,$$
where $D(x,R_x)$ is the disk of center $x$ and radius $R_x$ with $R_x$  the distance from $x$ to $\varGamma$. Since $\Delta  \mathsf S_\varGamma p\in L^2(\varOmega_\varGamma^+)$, 
the result follows from Cauchy-Schwarz inequality.
\end{enumerate}
\end{proof}
%-------------------------------------------------------------------------------------------
It should be noted here that $\mathsf S_\varGamma p$ is not the biharmonic single-layer potential of total trace $p$. For instance, if $p$ is the total trace of an affine function, then 
$\mathsf S_\varGamma p$ is equal to this function while the biharmonic single-layer potential is not. 
%====================================================
\section{The Robin matrix}
\label{sec:3}
In this section, we will define the Robin matrix of a multi-connected curve $\varGamma$. Since the Robin matrix depends only on 
$\varGammae$, to lighten the notation, we will assume that $\varGamma$ is such that $\varGamma=\varGammae$. We assume also 
that the origin lies in $\varOmega_\varGamma^-$ and we introduce the functions:
$$G_1(x)=-\frac{\partial G_0}{\partial x_1}(x)=-\frac{x_1}{8\pi}\Big[ 2\ln\frac{|x|}{\kappa_0}+1\Big]\qquad
\text{and}\qquad G_2(x)=-\frac{\partial G_0}{\partial x_2}(x)=-\frac{x_2}{8\pi}\Big[ 2\ln\frac{|x|}{\kappa_0}+1\Big],$$
and therefore, denoting by $\omega_j$ the Laplacian of $G_j$ (for $j=0,1,2$), we have:
$$\omega_0(x)=\frac{1}{2\pi}\Big[\ln\frac{|x|}{\kappa_0}+1\Big],\qquad\omega_1(x)= -\frac{1}{2\pi}\frac{x_1}{|x|^2},\qquad \omega_2(x)
= -\frac{1}{2\pi}\frac{x_2}{|x|^2}\forallt x\in\mathbb R^2\setminus\{0\}.$$
The following notations will also be helpful:
$$G(x)=\begin{pmatrix}
G_0(x)\\G_1(x)\\G_2(x)\end{pmatrix}\qquad\text{and}\qquad X(x)=\begin{pmatrix}1\\x_1\\x_2\end{pmatrix}\qquad\text{and}\qquad 
\omega(x)=\begin{pmatrix}\omega_0(x)\\ \omega_1(x)\\ \omega_2(x)\end{pmatrix}.$$
Let $u$ be a biharmonic function in $H^2_{\ell oc}(\overline{\varOmega_\varGamma^+})$ and $p$ be in $H(\varGamma)$, 
and define:
\begin{equation}
\label{croch}
\big[u,p\big]_\varGamma=-\int_{\varGamma}\partial_n(\Delta u^+)\mathsf S_\varGamma p\ds+\int_{\varGamma}(\Delta u^+)\partial_n \mathsf S_\varGamma p\ds
-\int_\varGamma\partial_n u(\Delta \mathsf S^+_\varGamma p)\ds+\int_\varGamma  u\,\partial_n(\Delta \mathsf S^+_\varGamma p)\ds.
\end{equation}
This bracket will prove crucial in the analysis. Let us collect some of its properties:
\begin{lemma}
\label{wppfrqe}
\begin{enumerate}
\item For every $p,q\in H(\varGamma)$, $\big[\mathsf S_\varGamma p, q\big]_\varGamma=0$.
\item If $\mathscr C$ is another Jordan curve  such that $\varOmega_\C^+\subset 
\varOmega_\varGamma^+$, then for every $p\in H(\varGamma)$ and $j=0,1,2$, $\big[G_j,p\big]_\varGamma=
\big[G_j,\gamma_{\C}\circ\mathsf S_\varGamma p\big]_{\C}$.
\item For $j,k=0,1,2$, $\big[G_j,(\gamma_\varGamma X_k)\big]_\varGamma= \delta_{jk}$.
\item For $j,k=0,1,2$, $\big[G_j, \gamma_\varGamma G_k\big]_\varGamma=\big[G_k, \gamma_\varGamma G_j\big]_\varGamma$.
\end{enumerate}
\end{lemma}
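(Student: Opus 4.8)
The starting point is to recognise that the bracket \eqref{croch} is nothing but the boundary bilinear form of the biharmonic Green formula. Setting $v=\mathsf S_\varGamma p$ and
$$\mathcal B_\varGamma(u,v)=\int_\varGamma\big(u\,\partial_n\Delta v-\partial_n u\,\Delta v-v\,\partial_n\Delta u+\Delta u\,\partial_n v\big)\ds,$$
with all traces taken from $\varOmega_\varGamma^+$ and $n$ the (outward, for $\varOmega_\varGamma^+$) normal, a termwise comparison gives $[u,p]_\varGamma=\mathcal B_\varGamma(u,\mathsf S_\varGamma p)$. I will repeatedly use three elementary features of this form: the Green identity $\int_\Omega(u\,\Delta^2 v-v\,\Delta^2 u)\dx=\mathcal B_{\partial\Omega}(u,v)$ (outward normal) for $u,v$ biharmonic and smooth enough on a bounded Lipschitz $\Omega$; the antisymmetry $\mathcal B_\varGamma(u,v)=-\mathcal B_\varGamma(v,u)$ (swap $u$ and $v$ in that identity); and the fact that each integrand carries exactly one normal derivative, so reversing the orientation of $n$ reverses the sign of the form.

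For the first assertion both arguments lie in $W^2_\varGamma(\mathbb R^2)^\perp$. Splitting $\mathcal B_\varGamma(\mathsf S_\varGamma p,\mathsf S_\varGamma q)$ into the pair of terms carrying $\Delta(\mathsf S_\varGamma p)$ and the pair carrying $\Delta(\mathsf S_\varGamma q)$, and applying the third assertion of Lemma~\ref{gtqpllkj} once with $(u,v)=(\mathsf S_\varGamma p,\mathsf S_\varGamma q)$ and once with the roles exchanged, each pair equals $\mp(\Delta\mathsf S_\varGamma p,\Delta\mathsf S_\varGamma q)_{L^2(\varOmega_\varGamma^+)}$, so their sum vanishes. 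This is the cleanest point and needs no information at infinity, since any behaviour at infinity is already absorbed in Lemma~\ref{gtqpllkj}.

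For the second assertion I would deform the contour from $\varGamma$ to $\mathscr C$. Since the origin lies in $\varOmega_\varGamma^-$, both $G_j$ and $\mathsf S_\varGamma p$ are biharmonic on the ring $\varOmega_\varGamma^+\cap\varOmega_{\mathscr C}^-$, and the Green identity there yields $\mathcal B_\varGamma(G_j,\mathsf S_\varGamma p)=\mathcal B_{\mathscr C}(G_j,\mathsf S_\varGamma p)$ once the sign change from the outward normal of the ring on $\mathscr C$ is taken into account. The left-hand side is $[G_j,p]_\varGamma$; for the right-hand side the fourth assertion of Lemma~\ref{gtqpllkj} gives $\mathsf S_{\mathscr C}\circ\gamma_{\mathscr C}\circ\mathsf S_\varGamma p=\mathsf S_\varGamma p$ throughout $\varOmega_{\mathscr C}^+$, so the $\mathscr C$-traces entering $\mathcal B_{\mathscr C}(G_j,\cdot)$ coincide with those of $\mathsf S_{\mathscr C}(\gamma_{\mathscr C}\mathsf S_\varGamma p)$ and the right-hand side is exactly $[G_j,\gamma_{\mathscr C}\mathsf S_\varGamma p]_{\mathscr C}$. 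For the third assertion, $X_k$ is affine, hence $\mathsf S_\varGamma(\gamma_\varGamma X_k)=X_k$ and $\Delta X_k=0$, so the bracket collapses to $-\int_\varGamma(\partial_n\omega_j\,X_k-\omega_j\,\partial_n X_k)\ds$ with $\omega_j=\Delta G_j$. As $\omega_0$ is the fundamental solution of the Laplacian up to a constant and $\omega_1,\omega_2$ are its first derivatives, one has $\Delta\omega_0=\delta_0$ and $\Delta\omega_j=-\partial_{x_j}\delta_0$ ($j=1,2$); Green's representation formula in the bounded domain $\varOmega_\varGamma^-$ (outward normal $-n$) then returns $X_k(0)=\delta_{0k}$ for $j=0$ and $\partial_{x_j}X_k(0)=\delta_{jk}$ for $j=1,2$, i.e. $\delta_{jk}$ in every case.

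The fourth assertion I would reduce to the identity $\mathcal B_\varGamma(G_j,G_k)=0$. With $w_k=\mathsf S_\varGamma(\gamma_\varGamma G_k)$ and $\phi_k=G_k-w_k$ (so $\gamma_\varGamma\phi_k=0$), the first assertion gives $\mathcal B_\varGamma(w_j,w_k)=0$, and every integrand vanishes when both Cauchy traces are zero, so $\mathcal B_\varGamma(\phi_j,\phi_k)=0$; hence $[G_j,\gamma_\varGamma G_k]_\varGamma=\mathcal B_\varGamma(G_j,w_k)=\mathcal B_\varGamma(\phi_j,G_k)$. Expanding $\phi_j=G_j-w_j$ and using antisymmetry together with $\mathcal B_\varGamma(G_k,w_j)=[G_k,\gamma_\varGamma G_j]_\varGamma$ then gives $[G_j,\gamma_\varGamma G_k]_\varGamma-[G_k,\gamma_\varGamma G_j]_\varGamma=\mathcal B_\varGamma(G_j,G_k)$. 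To kill the right-hand side I would deform $\varGamma$ to a small circle $\partial D_\epsilon$ about the origin via the Green identity on $\varOmega_\varGamma^-\setminus\overline{D_\epsilon}$, obtaining $\mathcal B_\varGamma(G_j,G_k)=\mathcal B_{\partial D_\epsilon}(G_j,G_k)$ (normal toward the origin) for every small $\epsilon$; on this circle the form vanishes by angular parity, since $G_0,\omega_0$ are radial while $G_1,\omega_1\propto\cos\theta$ and $G_2,\omega_2\propto\sin\theta$, so every off-diagonal integrand is proportional to $\cos\theta$, $\sin\theta$ or $\cos\theta\sin\theta$ and integrates to zero, the diagonal forms vanishing by antisymmetry. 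The main obstacle is not conceptual but a matter of bookkeeping: keeping the orientation of $n$ correct in each deformation, and — above all — handling the singularities of $G_1,G_2$ and their Laplacians at the origin in this last step, which is precisely where passing to the vanishing circle $\partial D_\epsilon$ and invoking parity avoids an awkward distributional pairing. Throughout, I would lean on the third and fourth assertions of Lemma~\ref{gtqpllkj} rather than integrate by parts by hand, so as to dispense with any estimate at infinity.
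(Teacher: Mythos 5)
Your argument is correct, and for the first two assertions it coincides with the paper's: the first point follows by applying the integration-by-parts identity of the third point of Lemma~\ref{gtqpllkj} to each of the two pairs of terms in \eqref{croch}, producing $\mp\big(\Delta\mathsf S_\varGamma p,\Delta\mathsf S_\varGamma q\big)_{L^2(\varOmega_\varGamma^+)}$, and the second is exactly the paper's integration by parts in the ring between $\varGamma$ and $\mathscr C$ followed by the fourth point of Lemma~\ref{gtqpllkj}. For the last two assertions you take a genuinely different route. The paper proves the third point by pushing the bracket out to a large circle $\mathcal C_R$ (via the second point) and evaluating the resulting integrals explicitly; you instead collapse onto the origin, exploiting that $\omega_0$ is (up to an additive constant) the fundamental solution of the Laplacian and $\omega_1,\omega_2$ are its first derivatives, so that Green's representation formula in $\varOmega_\varGamma^-$ returns $X_k(0)$ and $\partial_{x_j}X_k(0)$, i.e.\ $\delta_{jk}$. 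For the fourth point the paper splits the bracket into two pairs, identifies one pair with the manifestly symmetric quantity $\big(\Delta\mathsf S_\varGamma\gamma_\varGamma G_j,\Delta\mathsf S_\varGamma\gamma_\varGamma G_k\big)_{L^2(\varOmega_\varGamma^+)}$ and symmetrizes the other by integrating by parts out to a large circle, checking by hand that the residual boundary terms there vanish for $j\neq k$; you instead show that the antisymmetric part of the bracket equals the bilinear concomitant $\mathcal B_\varGamma(G_j,G_k)$ of the kernels themselves, and annihilate it by deforming $\varGamma$ to a small circle $\partial D_\epsilon$ about the origin, where the form vanishes exactly by angular parity. Your observation that no limit $\epsilon\to 0$ is required is important, since the individual terms on $\partial D_\epsilon$ are only $\mathscr O(\ln\epsilon)$ and would not vanish termwise. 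Both of your variants avoid all asymptotic computations at infinity and are arguably cleaner and more conceptual; the paper's large-circle route has the side benefit of being of the same nature as the explicit computations \eqref{def_mu_lam} needed later for Theorem~\ref{main:theo2} and Proposition~\ref{prop:1}. The only point deserving care --- which you flag yourself --- is the orientation convention for $n$ (directed towards the bounded domain enclosed by each curve) in each application of Green's identity; with that bookkeeping done consistently, your proof is complete.
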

%-------------------------------------------------------------------------------------------
\begin{proof}\begin{enumerate}
\item It suffices to combine the definition \eqref{croch} with the third point of Lemma~\ref{gtqpllkj}.
\item An integration by parts in the domain between $\varGamma$ and $\C$ followed by the fourth point of Lemma~\ref{gtqpllkj} leads to 
the equality.
\item Let $\mathcal C_R$ be a large circle of radius $R$, centered at the origin and enclosing $\varGamma$. The preceding point asserts that:
\begin{equation}
\label{wxpghyi}
\big[G_j, (\gamma_\varGamma X_k)\big]_\varGamma=\big[G_j, (\gamma_{\mathcal C_R} X_k)\big]_{\mathcal C_R}=-\int_{\mathcal C_R}(\partial_n \omega_j) X_k \ds+
\int_{\mathcal C_R}\omega_j (\partial_n X_k)\ds.
\end{equation}
Then, explicit computations yield the result.
\item 
According to the third point of Lemma~\ref{gtqpllkj}:
\begin{subequations}
\label{fvprwo}
\begin{multline}
\int_\varGamma(\partial_n G_j) (\Delta \mathsf S^+_\varGamma\circ\gamma_\varGamma G_k)\ds-
\int_\varGamma G_j\partial_n(\Delta \mathsf S^+_\varGamma\circ\gamma_\varGamma G_k)\ds=\\
\int_\varGamma(\partial_n G_k) (\Delta \mathsf S^+_\varGamma\circ\gamma_\varGamma G_j)\ds-
\int_\varGamma G_k\partial_n(\Delta \mathsf S^+_\varGamma\circ\gamma_\varGamma G_j)\ds.
\end{multline}
On the other hand, let $D$ be the domain between $\varGamma$ and a large circle $\mathcal C_R$ centered at the origin. 
Integrating by parts the zero quantity $(\Delta\omega_j,G_k)_{L^2(D)}$, we get:
\begin{equation}
\int_\varGamma(\partial_n \omega_j)G_k\ds-
\int_\varGamma \omega_j(\partial_n G_k)\ds=\int_{\mathcal C_R}(\partial_n \omega_j) G_k\ds-
\int_{\mathcal C_R} \omega_j(\partial_n G_k)\ds+(\omega_j,\omega_k)_{L^2(D)},
\end{equation}
\end{subequations}
and one easily verifies that the boundary integrals in the right hand side vanish  when $j\neq k$.
Using the equalities \eqref{fvprwo} in the definition of $\big[G_j,\gamma_\varGamma G_k\big]_\varGamma$ 
leads to the result. 
\end{enumerate}
\end{proof}
For every $p\in H(\varGamma)$ we define $\big[G,p\big]_\varGamma$ as the vector in $\mathbb R^3$ whose components are 
$\big[G_j,p\big]_\varGamma$ ($j=0,1,2$).
%-------------------------------------------------------------------------------------------
\begin{definition}
The $3\times 3$ matrix:
$$\Lambda_\varGamma=\big(\big[G_j,\gamma_\varGamma G_k\big]_\varGamma\big)_{0\leqslant j\leqslant 2\atop 0\leqslant k\leqslant 2},$$
will be called the Robin matrix (by analogy with the Robin  constant for the harmonic single-layer potential).
The fourth point of Lemma~\ref{wppfrqe} asserts that this matrix is symmetric. 
\end{definition}
%%-------------------------------------------------------------------------------------------
We define now, for $k=0,1,2$:
\begin{equation}
\label{defGbi}
\mathscr G^k_\varGamma(x)=\begin{cases}
G_k(x)-\mathsf S_\varGamma\circ\gamma_\varGamma G_k(x)
+\big[G,\gamma_\varGamma G_k\big]_\varGamma\cdot X(x)&(x\in\varOmega^+_\varGamma)\\
\big[G,\gamma_\varGamma G_k\big]_\varGamma\cdot X(x)&(x\in\varOmega^-_\varGamma)
\end{cases}
\qquad\text{and}\qquad
\mathscr G_\varGamma=\begin{pmatrix}\mathscr G^0_\varGamma\\\mathscr G^1_\varGamma\\\mathscr G^2_\varGamma\end{pmatrix}.
\end{equation}
Applying the total trace operator to each component of the vectors, we obtain the equality:
\begin{equation}
\label{wxgaqse}
\gamma_\varGamma \mathscr G_\varGamma=\Lambda_\varGamma \big(\gamma_\varGamma X\big).
\end{equation}
%-------------------------------------------------------------------------------------------
%====================================================
\section{Invertibility of the biharmonic single-layer potential}
We now return to the general case in which $\varGamma$ represents a union of Jordan curves, as described in Section~\ref{sec:1}. Up to a translation, we can assume that the origin lies in $\varOmega_\varGamma^-$. For every $p\in H(\varGamma)$, we denote by $p_e$ the restriction of $p$ to $\varGammae$.
\begin{definition}
If $\det \Lambda_{\varGammae}\neq 0$, we define for every $p\in H(\varGamma)$:
\begin{equation}
\label{def_Sp2}
\mathscr S^\dagger_\varGamma p=\mathsf S_\varGamma p-\big[G,p_e\big]_{\varGammae}\cdot X+
\big[G,p_e\big]_{\varGammae}\cdot\Lambda_{\varGammae}^{-1}\mathscr G_{\varGammae}.
\end{equation}
\end{definition}
We are going to show that, this time, $\mathscr S^\dagger_\varGamma p$ coincides well 
with the biharmonic single-layer potential of total trace $p$. 
\par
Every function $u$ in $L^2_{\ell oc}(\mathbb R^2)$ harmonic in $\mathbb R^2\setminus\varGamma$ admits
one-sided Dirichlet and Neumann traces on $\varGamma$ in the spaces $H^{-1/2}(\varGamma)$ 
and $H^{-3/2}(\varGamma)$ respectively. Taking into account the orientation of the unit normal $n$, we set:
\begin{equation}
\label{conv}
\big[u\big]_\varGamma=\gamma_\varGamma^Du^+-\gamma_\varGamma^D u^-\qquad\text{and}\qquad 
\big[\partial_n u\big]_\varGamma=\gamma_\varGamma^Nu^+-\gamma_\varGamma^N u^-,
\end{equation}
and we define the operator:
\begin{equation}
\fct{U_\varGamma:H(\varGamma)}{H'(\varGamma)}{p}{\big(-\big[\partial_n \Delta \mathscr S_\varGamma^\dagger p\big]_\varGamma,
\big[\Delta \mathscr S_\varGamma^\dagger p\big]_\varGamma\big).}
\end{equation}
%%-------------------------------------------------------------------------------------------
\begin{theorem}
\label{main:theo2}
The operator $V_\varGamma$ is invertible if and only if $\det \Lambda_{\varGammae}\neq 0$. In this case
$\mathscr S^\dagger_\varGamma p=\mathscr S_\varGamma\circ U_\varGamma p$ for every $p\in H(\varGamma)$. Taking the total trace on $\varGamma$, this entails that $V_\varGamma\circ U_\varGamma p=p$.
\end{theorem}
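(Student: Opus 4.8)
The plan is to prove that, when $\det\Lambda_{\varGammae}\neq0$, the function $v:=\mathscr S^\dagger_\varGamma p$ is \emph{exactly} the biharmonic single-layer potential of the density read off from its transverse jumps, namely $\mathscr S_\varGamma(U_\varGamma p)$; the announced identities and the invertibility of $V_\varGamma$ then follow at once. Two tools are used repeatedly. First, a two-dimensional Liouville statement: an entire biharmonic function on $\mathbb R^2$ whose second-order derivatives are square-integrable is affine (Almansi decomposition together with Liouville for harmonic $L^2$ functions), and one growing like $\mathscr O(|x|^2\log|x|)$ is a polynomial of degree $\leqslant2$. Second, the far-field expansion of a single-layer potential, $\mathscr S_\varGamma\sigma(x)=\mu\cdot G(x)+\mathscr O(\log|x|)$, where $\mu\in\mathbb R^3$ collects the leading moments of $\sigma$ and where the first neglected Taylor term is already $\mathscr O(\log|x|)$: there is no \emph{residual} affine term. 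I will say that a function has \emph{zero residual affine far field} when, after subtracting its $\mu\cdot G$ part, the remainder is $\mathscr O(\log|x|)$.

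First I would record the regularity and trace of $v$. Since $\mathsf S_\varGamma p$, the components of $\mathscr G_{\varGammae}$, and the affine functions are all biharmonic in $\mathbb R^2\setminus\varGamma$ and $\mathcal C^1$ across $\varGamma$ (the components of $\mathscr G_{\varGammae}$ are affine, hence smooth, throughout $\varOmega^-_{\varGammae}$, which contains the inner curves, and are $\mathcal C^1$ across $\varGammae$ by the matching visible in \eqref{defGbi}), the same holds for $v$. Writing $a:=\Lambda_{\varGammae}^{-1}[G,p_e]_{\varGammae}$ and using \eqref{wxgaqse}, the symmetry of $\Lambda_{\varGammae}$, and $\Lambda_{\varGammae}a=[G,p_e]_{\varGammae}$, the corrections $-[G,p_e]_{\varGammae}\cdot X$ and $a\cdot\mathscr G_{\varGammae}$ cancel on $\varGammae$, while both vanish identically on $\varOmega^-_{\varGammae}$ (there $\mathscr G_{\varGammae}=\Lambda_{\varGammae}X$); hence $\gamma_\varGamma v=p$, and $v=\mathsf S_\varGamma p$ near the inner curves and at the origin, so $v$ is smooth there.

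The core step is the far-field of $v$. Substituting the expression of $\mathscr G_{\varGammae}$ valid on $\varOmega^+_\varGamma$ into \eqref{def_Sp2}, the affine terms telescope and one gets $v=a\cdot G+\big(\mathsf S_\varGamma p-\mathsf S_{\varGammae}\gamma_{\varGammae}(a\cdot G)\big)$ on $\varOmega^+_\varGamma$. Here I would interpret the bracket as the extractor of the residual affine far field: from Lemma~\ref{wppfrqe}(3) (the case $q=\gamma_{\varGammae}X_k$, where $\mathsf S_{\varGammae}q=X_k$) together with Lemma~\ref{wppfrqe}(1) and linearity, the residual affine far field of any $\mathsf S_{\varGammae}q$ has coefficients $\big([G_j,q]_{\varGammae}\big)_j$. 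Since $\gamma_{\varGammae}\mathsf S_\varGamma p=p_e$ and $[G,p_e]_{\varGammae}=\Lambda_{\varGammae}a=[G,\gamma_{\varGammae}(a\cdot G)]_{\varGammae}$, the two finite-energy terms share the \emph{same} residual affine far field, so their difference is $\mathscr O(\log|x|)$. Therefore $v=a\cdot G+\mathscr O(\log|x|)$ has zero residual affine far field. The locality lemmas (Lemma~\ref{gtqpllkj}(4) and Lemma~\ref{wppfrqe}(2)) are exactly what let me replace $\varGamma$ by $\varGammae$ and $p$ by $p_e$ in these computations, which is the reason the inner curves play no role.

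Now set $q:=U_\varGamma p$ and $w:=\mathscr S_\varGamma q$. By the very definition of $U_\varGamma$, the jumps of $\Delta v$ and $\partial_n\Delta v$ across $\varGamma$ coincide with those of $\Delta w,\partial_n\Delta w$, so $d:=v-w$ is biharmonic across $\varGamma$, hence entire biharmonic on $\mathbb R^2$ (it is biharmonic and smooth at the origin as well). Both $v$ and $w$ have zero residual affine far field, so $d=(a-\mu)\cdot G+\mathscr O(\log|x|)$ with zero residual affine far field. The Liouville argument forces $a=\mu$ (the $|x|^2\log|x|$ and $|x|\log|x|$ growths carried by $G$ are incompatible with an entire biharmonic function), after which $d$ is entire biharmonic and $\mathscr O(\log|x|)$, hence constant; that constant is its residual affine far field, which is zero. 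Thus $d=0$, i.e. $\mathscr S^\dagger_\varGamma p=\mathscr S_\varGamma(U_\varGamma p)$, and taking the total trace gives $V_\varGamma\circ U_\varGamma p=\gamma_\varGamma v=p$. Consequently $V_\varGamma$ is onto, and since it is self-adjoint surjectivity forces injectivity, so $V_\varGamma$ is an isomorphism with $U_\varGamma=V_\varGamma^{-1}$. For the converse, if $\det\Lambda_{\varGammae}=0$ I would take $\nu\neq0$ with $\Lambda_{\varGammae}\nu=0$ and set $u:=\nu\cdot\mathscr G_{\varGammae}$: by \eqref{wxgaqse} its trace on $\varGammae$ is $(\Lambda_{\varGammae}\nu)\cdot\gamma_{\varGammae}X=0$, and it equals $\Lambda_{\varGammae}\nu\cdot X=0$ on $\varOmega^-_{\varGammae}$, so $\gamma_\varGamma u=0$, while $u$ has nonzero far field $\nu\cdot G$ with zero residual affine far field; by the core step $u=\mathscr S_\varGamma\sigma$ with $\sigma\neq0$, and $V_\varGamma\sigma=\gamma_\varGamma u=0$ exhibits a nontrivial kernel. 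The main obstacle is precisely the core step: establishing the interpretation of the brackets $[G_j,\cdot]_{\varGammae}$ as the residual-affine extractor (a Betti/Green computation at infinity) and carrying the Liouville uniqueness through the exact $G$-versus-affine bookkeeping of the far field; the multi-connected geometry adds the secondary task of reducing everything onto $\varGammae$ via the locality lemmas.
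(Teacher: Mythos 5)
Your architecture is the same as the paper's: reduce everything to $\varGammae$ via the locality lemmas, form the difference $d=\mathscr S^\dagger_\varGamma p-\mathscr S_\varGamma (U_\varGamma p)$, kill its bilaplacian using the jump conditions, match far fields, and invoke Liouville; the converse via a kernel vector of $\Lambda_{\varGammae}$ is also exactly the paper's argument. Your identity $\mathscr S^\dagger_\varGamma p=a\cdot G+\mathsf S_\varGamma p-\mathsf S_{\varGammae}\gamma_{\varGammae}(a\cdot G)$ with $a=\Lambda_{\varGammae}^{-1}[G,p_e]_{\varGammae}$ is correct, and deriving $a=A_\varGamma(U_\varGamma p)$ from the Liouville argument rather than computing $A_\varGamma(U_\varGamma p)$ directly (the paper's Lemma~\ref{prop4}) is a legitimate minor variant.

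The gap is at the very last step. You define ``zero residual affine far field'' as: the remainder after subtracting the $\mu\cdot G$ part is $\mathscr O(\log|x|)$. Under that definition a nonzero constant \emph{also} has zero residual affine far field, so the chain ``$d$ is entire biharmonic and $\mathscr O(\log|x|)$, hence constant; that constant is its residual affine far field, which is zero; thus $d=0$'' does not close: it only shows that $d$ is a constant. The linear components of the affine part are indeed pinned down by growth (a nonzero linear term is not $\mathscr O(\log|x|)$), but the constant is not, and the expansion \eqref{asymp_SLP1} shows that a single-layer potential carries $\mathscr O(\log|x|)$ and bounded oscillating contributions ($B_\varGamma(q)\omega_0$, $C_\varGamma(q)\tfrac{x_1^2-x_2^2}{|x|^2}$, $D_\varGamma(q)\tfrac{x_1x_2}{|x|^2}$) that prevent you from sharpening the remainder to $o(1)$ and reading the constant off the expansion. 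This is precisely where the paper does its real work: it writes $h=\alpha\cdot X$, observes that $\alpha_k=[G_k,p_1]_{\varGammae}-[G_k,p_2]_{\varGammae}$ with $[G_k,p_1]_{\varGammae}=0$ by construction, and proves $[G_k,p_2]_{\varGammae}=0$ by a Green identity on a large circle $\mathcal C_R$ with the auxiliary harmonic functions $G_k^\dagger=\nu_k(R)\omega_k+\lambda_k(R)X_k$ (see \eqref{def_mu_lam}--\eqref{ppppolp}), followed by the limit $R\to+\infty$; the integration against $\omega_0$ on $\mathcal C_R$ is exactly the device that extracts the constant your $\mathscr O(\log|x|)$ bookkeeping cannot see. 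You correctly flag this as ``the main obstacle,'' but the residual-affine extractor as you define it is too coarse to resolve it, so the proof of $d=0$ (and likewise of $v=0$ in the converse direction) is incomplete as written.
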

%%-------------------------------------------------------------------------------------------
The proof is based on a couple of technical lemmas. The first is the result of explicit calculations:
%-------------------------------------------------------------------------------------------
\begin{lemma}
For any $q=(q_0,q_1)\in H'(\varGamma)$, the biharmonic single-layer potential $\mathscr S_\varGamma q$ (defined by \eqref{eq:def_Sq})
and its partial derivatives up to order 2 admit the following asymptotic expansions as $|x|$ goes to $+\infty$:
\begin{subequations}
\label{asymp_SLP}
\begin{align}
\label{asymp_SLP1}
\mathscr S_\varGamma q(x)&=A_\varGamma(q)\cdot G(x)
+B_\varGamma(q)\omega_0(x)+
C_\varGamma(q)\frac{x_1^2-x_2^2}{|x|^2}+
D_\varGamma(q)\frac{x_1x_2}{|x|^2}+
\mathscr O(1/|x|),\\[-1mm]
\partial_{x_j}\mathscr S_\varGamma q(x)&=A_\varGamma(q)\cdot \partial_{x_j}G(x)
+\mathscr O(1/|x|),\\[1mm]
\partial^2_{x_jx_k}\mathscr S_\varGamma q(x)&=A_\varGamma(q)\cdot \partial_{x_jx_k}^2G(x)
+\mathscr O(1/|x|^2).
\label{asymp_SLP3}
\end{align}
\end{subequations}
where $A_\varGamma(q)\in\mathbb R^3$ is defined by:
$$A_\varGamma(q)=\int_\varGamma q_0(y)X(y)+q_1\partial_n X(y)\ds,$$
and $B_\varGamma(q)$, $C_\varGamma(q)$ and $D_\varGamma(q)$ are real constants depending on $q$.
\end{lemma}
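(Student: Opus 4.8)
The plan is to derive the asymptotic expansions directly from the integral representation \eqref{eq:def_Sq} by Taylor-expanding the kernels $G_0(x-y)$ and $\partial_{n(y)}G_0(x-y)$ in the variable $y$ around $y=0$, treating $|x|$ as large and $y$ ranging over the bounded curve $\varGamma$. Since $\varGamma$ is compact and $0\in\varOmega_\varGamma^-$, we have $|y|\leqslant C$ for $y\in\varGamma$, so $|y|/|x|\to 0$ uniformly. The leading behaviour should come from expanding $G_0$ to the order that produces the $G(x)$, $\omega_0(x)$ and angular $x_1^2-x_2^2$, $x_1x_2$ terms, with everything of lower homogeneity collected into the $\mathscr O(1/|x|)$ remainder.

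First I would record the exact multivariate Taylor expansion of the scalar kernel. Writing $|x-y|^2=|x|^2-2x\cdot y+|y|^2$ and $\ln|x-y|=\tfrac12\ln|x-y|^2$, I would expand $G_0(x-y)=\tfrac{1}{8\pi}[|x-y|^2\ln(|x-y|/\kappa_0)+\kappa_1]$ in powers of $y$. The zeroth-order term is $G_0(x)$; the first-order term is $-y\cdot\nabla G_0(x)=y_1 G_1(x)+y_2 G_2(x)$, which together with $G_0(x)$ assembles into $X(y)\cdot G(x)$ after using the definitions of $G_1,G_2$. The crucial observation is that the second-order terms in $y$, coming from $\tfrac12 y^{\tr}\nabla^2G_0(x)\,y$, are homogeneous of degree zero in $x$ up to a logarithmic factor: since $\nabla^2 G_0$ contains both a $\ln(|x|/\kappa_0)$ piece (whose trace reproduces $\omega_0$) and a bounded homogeneous-degree-zero piece built from $x_ix_j/|x|^2$, these generate precisely the $B_\varGamma\omega_0(x)$, $C_\varGamma(x_1^2-x_2^2)/|x|^2$ and $D_\varGamma x_1x_2/|x|^2$ contributions. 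All higher-order terms in $y$ decay at least like $1/|x|$, giving the remainder.

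Then I would handle the second, normal-derivative, kernel $\partial_{n(y)}G_0(x-y)=-n(y)\cdot\nabla_y G_0(x-y)=n(y)\cdot\nabla G_0(x-y)$, which is itself already one order smaller: its constant-in-$y$ part reproduces $\partial_n X(y)\cdot G(x)$ after integration, feeding the $A_\varGamma(q)$ coefficient, while its next terms contribute only to $B_\varGamma$, $C_\varGamma$, $D_\varGamma$ and the remainder. Integrating the two kernels against $q_0$ and $q_1$ over $\varGamma$ then identifies $A_\varGamma(q)=\int_\varGamma q_0 X+q_1\partial_n X\ds$ as the coefficient of $G(x)$, and lumps the degree-zero angular pieces into the three scalar constants $B_\varGamma(q),C_\varGamma(q),D_\varGamma(q)$. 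The derivative expansions \eqref{asymp_SLP} and \eqref{asymp_SLP3} follow by differentiating the kernel expansion term by term, noting that $\partial_{x_j}$ and $\partial^2_{x_jx_k}$ applied to the degree-zero and logarithmic correction terms produce only $\mathscr O(1/|x|)$ and $\mathscr O(1/|x|^2)$ contributions respectively, so that only the $A_\varGamma(q)\cdot G(x)$ term survives at leading order.

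The main obstacle I anticipate is the bookkeeping of the logarithmic factors: the Hessian $\nabla^2 G_0$ mixes a $\ln(|x|/\kappa_0)$ term with bounded terms, and one must verify carefully that the logarithmic growth from the second-order $y$-expansion is captured entirely by the explicitly-written $\omega_0(x)$ (which itself contains $\ln(|x|/\kappa_0)$) so that the residual is genuinely $\mathscr O(1/|x|)$ without a stray $\ln|x|/|x|$ term. I would control this by writing $\ln|x-y|=\ln|x|+\tfrac12\ln(1-2x\cdot y/|x|^2+|y|^2/|x|^2)$ and expanding the second logarithm, which is $\mathscr O(|y|/|x|)$, so that every logarithmic contribution beyond the leading $\ln|x|$ multiplies a factor decaying at least like $1/|x|$ and may be absorbed into the remainder. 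Justifying the uniformity of the remainder estimates — that the $\mathscr O$ constants depend only on $\|q\|_{H'(\varGamma)}$ and on $\varGamma$ — reduces to the continuity of $\mathscr S_\varGamma$ and standard duality estimates, which I would invoke rather than re-derive.
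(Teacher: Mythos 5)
Your proposal is correct and is precisely the ``explicit calculation'' that the paper invokes without writing out (the paper supplies no proof of this lemma). You also correctly identify the one delicate point: in the second-order term of the $y$-expansion the logarithmic piece $\tfrac{|y|^2}{8\pi}\ln(|x|/\kappa_0)$ and the two constant pieces $\tfrac{|y|^2}{16\pi}+\tfrac{|y|^2}{16\pi}$ recombine exactly into $\tfrac{|y|^2}{4}\,\omega_0(x)$, so no unabsorbed constant and no stray $\ln|x|/|x|$ term survives in the $\mathscr O(1/|x|)$ remainder.
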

%-------------------------------------------------------------------------------------------
\begin{lemma}
\label{prop4}Assume that $\det \Lambda_{\varGammae}\neq 0$.
Then for every $p\in H(\varGamma)$ , $A_\varGamma\big(U_\varGamma p\big)=\Lambda_{\varGammae}^{-1}\big[G,  p_e\big]_{\varGammae}$.
\end{lemma}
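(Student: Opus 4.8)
The plan is to compute $A_\varGamma(U_\varGamma p)$ directly from its definition, observing that each of its three components is a pairing against an affine function $X_k$ that is in fact localized at infinity, and then to read off the relevant coefficient from the explicit formula \eqref{def_Sp2}. Writing $w=\mathscr S^\dagger_\varGamma p$ and $H=\Delta w$ (which is harmonic in $\mathbb R^2\setminus\varGamma$), the very definition of $U_\varGamma$ together with $A_\varGamma(q)=\int_\varGamma q_0X+q_1\partial_nX\ds$ gives, for $k=0,1,2$,
$$A_\varGamma(U_\varGamma p)_k=\int_\varGamma -\big[\partial_n H\big]_\varGamma X_k+\big[H\big]_\varGamma\,\partial_n X_k\ds=:\Phi_k(H).$$
Only the jumps of $H$ across $\varGamma$ enter here, which is what keeps the argument independent of Theorem~\ref{main:theo2} (no prior knowledge that $\mathscr S^\dagger_\varGamma=\mathscr S_\varGamma\circ U_\varGamma$ is used).

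First I would show that $\Phi_k$ is a boundary pairing at infinity. Applying Green's second identity to $(H,X_k)$ on each connected component of $B_R\setminus\varGamma$, where $B_R$ is a large disk containing $\varGamma$ and $\mathcal C_R=\partial B_R$, and using that $H$ and the affine function $X_k$ are both harmonic off $\varGamma$, every interface integral recombines into the jump terms defining $\Phi_k$ while the only outer contribution is carried by $\mathcal C_R$. This yields, for all large $R$,
$$\Phi_k(H)=\int_{\mathcal C_R}\big(X_k\,\partial_\nu H-H\,\partial_\nu X_k\big)\ds,$$
with $\nu$ the outward normal; in particular the right-hand side is independent of $R$. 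Summing over components treats the inner curves automatically, so the multi-connected geometry costs nothing beyond bookkeeping of the jumps. Comparing the normal orientation with \eqref{wxpghyi}, this circle integral equals $\big[G_j,\gamma_{\mathcal C_R}X_k\big]_{\mathcal C_R}$ when $H=\omega_j$, hence equals $\delta_{jk}$ by Lemma~\ref{wppfrqe}; and a size estimate shows it vanishes as $R\to+\infty$ when $H=\mathscr O(1/|x|^2)$. Thus $\Phi_k$ reads off the coefficient of $\omega_k$ in the expansion of $H$ at infinity.

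It then remains to expand $H=\Delta w$ in $\varOmega^+_\varGamma=\varOmega^+_{\varGammae}$. From \eqref{def_Sp2} and $\Delta X=0$ we get $H=\Delta\mathsf S_\varGamma p+\big[G,p_e\big]_{\varGammae}\cdot\Lambda_{\varGammae}^{-1}\Delta\mathscr G_{\varGammae}$, and in $\varOmega^+_{\varGammae}$ the definition \eqref{defGbi} gives $\Delta\mathscr G^j_{\varGammae}=\omega_j-\Delta\mathsf S_{\varGammae}(\gamma_{\varGammae}G_j)$. The crux, and the step I expect to be the main obstacle, is to show that the $\Delta\mathsf S$–type terms carry no $\omega$–component. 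Here I would exploit that $\mathsf S_\varGamma p$ and $\mathsf S_{\varGammae}(\gamma_{\varGammae}G_j)$ lie in $W^2(\mathbb R^2)$: a nonzero dipole coefficient in $\Delta\mathsf S$ would force a growth $\sim|x|\ln|x|$ of $\mathsf S$ itself, since $\Delta G_j=\omega_j$ and no single-valued harmonic function grows like $|x|\ln|x|$; but such growth is excluded by the weighted integrability built into $W^2$. Combined with the bound $\Delta\mathsf S_\varGamma p=\mathscr O(1/|x|)$ of Lemma~\ref{gtqpllkj} (which already rules out a logarithmic term), this forces $\Delta\mathsf S_\varGamma p$ and $\Delta\mathsf S_{\varGammae}(\gamma_{\varGammae}G_j)$ to be $\mathscr O(1/|x|^2)$, so $\Phi_k$ annihilates them.

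Feeding this back, $\Phi_k(\Delta\mathscr G^j_{\varGammae})=\delta_{jk}$ and $\Phi_k(\Delta\mathsf S_\varGamma p)=0$, whence by linearity $\Phi_k(H)=\sum_j\big(\Lambda_{\varGammae}^{-1}\big[G,p_e\big]_{\varGammae}\big)_j\,\delta_{jk}$, that is $A_\varGamma(U_\varGamma p)=\Lambda_{\varGammae}^{-1}\big[G,p_e\big]_{\varGammae}$. The only genuine analytic input is the $W^2$–based exclusion of the dipole terms; the rest is Green's identity and the orthogonality relation $\psi_k(\omega_j)=\delta_{jk}$ already encoded in Lemma~\ref{wppfrqe}.
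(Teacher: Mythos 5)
Your proof is correct, but it follows a genuinely different route from the paper's. The paper also starts from $A_\varGamma(U_\varGamma p)_k=\int_\varGamma -[\partial_n\Delta\mathscr S^\dagger_\varGamma p]_\varGamma X_k+[\Delta\mathscr S^\dagger_\varGamma p]_\varGamma\,\partial_n X_k\ds$, but it only integrates by parts over $\varOmega_\varGamma^-$ so as to rewrite this as the bracket $\big[\mathscr S^\dagger_\varGamma p,\gamma_\varGammae X_k\big]_\varGammae$; it then finishes purely algebraically, using the first point of Lemma~\ref{wppfrqe} to kill the $\mathsf S$-type and affine terms in \eqref{def_Sp2} and the third point to produce $\delta_{jk}$. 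You instead push the whole computation out to a large circle $\mathcal C_R$ and read off the coefficient of $\omega_k$ in the expansion of $\Delta\mathscr S^\dagger_\varGamma p$ at infinity --- essentially the technique the paper itself deploys later, in the proof of Theorem~\ref{main:theo2}, to show $[G_k,p_2]_\varGammae=0$. The price of your route is that you need the sharpened decay $\Delta\mathsf S_\varGamma p=\mathscr O(1/|x|^2)$, which is \emph{not} what Lemma~\ref{gtqpllkj} states ($\mathscr O(1/|x|)$ does not by itself exclude the dipole terms, since $\omega_1,\omega_2$ are themselves $\mathscr O(1/|x|)$). Your justification via the $|x|\ln|x|$ growth that a dipole would force on $\mathsf S_\varGamma p$ is valid, though there is a one-line alternative: $\Delta\mathsf S_\varGamma p$ is harmonic in $\varOmega_\varGamma^+$ and lies in $L^2(\varOmega_\varGamma^+)$, whereas $\omega_1$ and $\omega_2$ fail to be square-integrable near infinity, so the $r^{-1}$ harmonics (and a fortiori the constant and logarithmic ones) must be absent. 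What the paper's argument buys is brevity and reuse of the bracket calculus of Section~\ref{sec:3}; what yours buys is a self-contained ``flux at infinity'' picture that makes transparent why $A_\varGamma(U_\varGamma p)$ only sees the $\omega$-content of $\Delta\mathscr S^\dagger_\varGamma p$, and it handles the multi-connected geometry with no extra effort.
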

\begin{proof}
By definition, we have:
$$A_\varGamma\big(U_\varGamma p\big)=-\int_\varGamma\big[\partial_n(\Delta \mathscr S^\dagger_\varGamma p)\big]_\varGamma\,X\ds
+\int_\varGamma\big[\Delta \mathscr S^\dagger_\varGamma p\big]_\varGamma \partial_nX\ds,$$
and since the functions $X_j$ (for $j=0,1,2$) are harmonic in $\mathbb R^2$, an integration by parts on the domain $\varOmega_\varGamma^-$ yields:
$$A_\varGamma\big(U_\varGamma p\big)=-\int_\varGammae\partial_n(\Delta \mathscr S^\dagger_\varGamma p)^+\,X_j\ds+\int_\varGammae(\Delta \mathscr S^\dagger_\varGamma p)^+ \partial_nX\ds=\big[ \mathscr S^\dagger_\varGamma p,\gamma_\varGammae X\big]_\varGammae.$$
Using the definition \eqref{def_Sp2} of $\mathscr S^\dagger_\varGamma p$, the fourth point of Lemma~\ref{gtqpllkj} and the first point of 
Lemma~\ref{wppfrqe}, we obtain:
$$\big[ \mathscr S^\dagger_\varGamma p,\gamma_\varGammae X\big]_\varGammae=\big[
\big[G,p_e\big]_\varGammae\cdot\Lambda_\varGammae^{-1}G,\gamma_\varGammae X\big]_\varGammae.$$
Then, the third point of Lemma~\ref{wppfrqe} leads to the conclusion.
\end{proof}
%-------------------------------------------------------------------------------------------
%-------------------------------------------------------------------------------------------
\begin{proof}[Proof of Theorem~\ref{main:theo2}]Assume that $\det \Lambda_{\varGammae}\neq 0$, let $p$ be in $H(\varGamma)$ and define $u=\mathscr S^\dagger_\varGamma p-\mathscr S_\varGamma\circ U_\varGamma p$. By construction, $\big[\Delta u\big]_\varGamma=0$ and $\big[\partial_n\Delta u\big]_\varGamma=0$, 
hence the function $\Delta u$ is harmonic in $\mathbb R^2$. Moreover, according to the last assertion of Lemma~\ref{gtqpllkj}:
\begin{equation}
\label{aplgyuij}
\Delta \mathscr S^\dagger_\varGamma p(x)=\big[G,p_e\big]_\varGammae\cdot\Lambda_\varGammae^{-1}\omega(x)+\mathscr O(1/|x|)
\qquad\text{as}\qquad |x|\longrightarrow+\infty.
\end{equation}
Combining \eqref{asymp_SLP3}, \eqref{aplgyuij} and Lemma~\ref{prop4}, we deduce that $\Delta u(x)$ tends to 0 as $|x|$ goes to $+\infty$ and
Liouville's theorem allows us to conclude that $\Delta u$ is equal to zero. Therefore, 
there exists a function $h$ harmonic in $\mathbb R^2$ such that $\mathscr S^\dagger_\varGamma p= \mathscr S_\varGamma \circ U_\varGamma p+h$
and hence also:
\begin{equation}
\label{dlkfjg}
\big(\mathscr S^\dagger_\varGamma p-\big[G,p_e\big]_\varGammae\cdot  \Lambda_\varGammae^{-1} G\big)=
 \big(\mathscr S_\varGamma \circ U_\varGamma p-\big[G,p_e\big]_\varGammae\cdot  \Lambda_\varGammae^{-1} G \big)+h\qquad
 \text{in }\varOmega_\varGamma^+.
 \end{equation}
Denote  by $p_1$ and $p_2$ respectively the total traces on $\varGammae$ of the functions in brackets.  
Considering again their asymptotic behavior, we deduce that they are equal to $\mathsf S_\varGammae p_1$ and $\mathsf S_\varGammae p_2$
respectively, in $\varOmega_\varGamma^+$. On the one hand, it follows that $h$ is in the space $W^2(\mathbb R^2)$ and therefore that $h=\alpha\cdot X$ with $\alpha\in\mathbb R^3$ (the affine functions are the only functions harmonic in $\mathbb R^2$ in $W^2(\mathbb R^2)$). On the other hand, since $p_1=p_e-\gamma_\varGammae\big[G,p_e\big]_\varGammae\cdot  \Lambda_\varGammae^{-1} G$, we have $\big[G_k,  p_1\big]_\varGammae=0$  and 
 $\big[G_k,\alpha\cdot (\gamma_\varGammae X)\big]_\varGammae=\alpha_k$ for $k=0,1,2$, according to the third point of Lemma~\ref{wppfrqe}. We are now going to verify that $\big[G_k,  p_2\big]_\varGammae=0$, which together with equality \eqref{dlkfjg} 
will allow us to conclude that $h=0$.

Let $\mathcal C_R$ be a large circle of radius $R$ and centered at the origin enclosing $\varGammae$. According 
to the second point of Lemma~\ref{wppfrqe}:
$$\big[G_k,p_2\big]_\varGammae=\big[G_k,\gamma_{\mathcal C_R}\circ\mathsf S_\varGammae p_2\big]_{\mathcal C_R}.$$
Define now:
\begin{subequations}
\label{def_mu_lam}
\begin{alignat}{3}
\nu_0(R)&=\frac{R^2}{4}\Big[2\ln \Big(\frac{R}{\kappa_0}\Big)+1\Big]&\qquad&
\qquad \lambda_0(R)=-\frac{R^2}{4\pi}
\Big[\ln\Big(\frac{R}{\kappa_0}\Big)+\ln^2\Big(\frac{R}{\kappa_0}\Big)\Big]+\frac{1}{8\pi}\big[\kappa_1-R^2\big],\\
\nu_j(R)&=-\frac{R^2}{4}&&
\qquad \lambda_j(R)=-\frac{1}{4\pi}\Big[\ln\Big(\frac{R}{\kappa_0}\big)+1\Big],\qquad(j=1,2),
\end{alignat}
\end{subequations}
and the functions $G_k^\dagger=\nu_k(R)\omega_k+\lambda_k(R)X_k$. Then $\gamma_{\mathcal C_R}G_k=\gamma_{\mathcal C_R}G_k^\dagger$ (the total traces coincide on 
$\mathcal C_R$), and since the function $G_k^\dagger$ is harmonic in $\varOmega_{\mathcal C_R}^+$, we deduce, using the third point of Lemma~\ref{gtqpllkj} that:
\begin{equation}
\label{ppppolp}
\int_{\mathcal C_R}\partial_n G_k(\Delta \mathsf S_\varGammae p_2)\ds-\int_{\mathcal C_R}  G_k\,\partial_n(\mathsf S_\varGammae p_2)\ds
=\int_{\mathcal C_R}\partial_n G^\dagger_k(\Delta \mathsf S_\varGammae p_2)\ds-\int_{\mathcal C_R}  G^\dagger_k\,\partial_n(\Delta\mathsf S_\varGammae p_2)\ds=0.
\end{equation}
Using the expansion \eqref{asymp_SLP1} for $\mathsf S_\varGammae p_2$ (recall that $\mathsf S_\varGammae p_2=\mathscr S_\varGamma \circ U_\varGamma p-A_\varGamma(U_\varGamma p))\cdot G$ in $\varOmega_\varGamma^+$), we arrive at:
$$\big[G_k,\gamma_{\mathcal C_R}\circ\mathsf S_\varGammae p_2\big]_{\mathcal C_R}=-\int_{\mathcal C_R}(\partial_n \omega_k) (\mathsf S_\varGammae p_2) \ds+
\int_{\mathcal C_R} \omega_k (\partial_n \mathsf S_\varGammae p_2)\ds\longrightarrow 0\qquad\text{as}\qquad R\longrightarrow+\infty.$$ 
All together, we have proved that $\big[G_k,p_2\big]_\varGammae=0$ for $k=0,1,2$ and therefore that $u=\mathscr S^\dagger_\varGamma p-\mathscr S_\varGamma\circ U_\varGamma p=0$.
\par
Assume now that $\det \Lambda_{\varGammae}= 0$. According to the definition \eqref{defGbi}, this implies that there exists 
$\xi\in\mathbb R^3$, $\xi\neq 0$ such that the function $\mathcal G=\mathscr G_{\varGammae}\cdot\xi$ is zero in $\varOmega_\varGamma^-$.
Let $q=\big(-\big[\partial_n\Delta \mathcal G\big]_\varGamma,\big[\Delta \mathcal G\big]_\varGamma\big)$ and $v=\mathcal G- \mathscr S_\varGamma q$.
We are going to verify that $v=0$ and hence that $V_\varGamma$ is not injective. Since $\mathcal G$ vanishes in $\varOmega_\varGamma^-$, 
we have:
\begin{equation}
\label{aab}
A_\varGamma(q)=-\int_\varGamma\big[\partial_n(\Delta \mathcal G)\big]_\varGamma\,X\ds
+\int_\varGamma\big[\Delta \mathcal G \big]_\varGamma \partial_nX\ds=
-\int_\varGammae \partial_n(\Delta \mathcal G)^+\,X\ds
+\int_\varGammae (\Delta \mathcal G)^+ \partial_nX\ds.
\end{equation}
On the other hand, combining the first and third points of Lemma~\ref{wppfrqe}, we obtain that:
\begin{equation}
\label{abb}
-\int_\varGammae \partial_n(\Delta \mathscr G^k_{\varGammae})^+\,X_j\ds
+\int_\varGammae (\Delta \mathscr G^k_{\varGammae})^+ \partial_nX_j\ds=\delta_{jk}\forallt j,k=0,1,2.
\end{equation}
Comparing \eqref{aab} and \eqref{abb}, it follows that $A_\varGamma(q)=\xi$ and therefore that $\mathcal G$ and  $\mathscr S_\varGamma q$ have the same asymptotic 
behavior. The rest of the proof is similar to the one establishing that $u=0$ above.
\end{proof}
%=====================================
\begin{theorem}
\label{main:3}
If the matrix $\Lambda_\varGammae$ is positive definite, the operator $V_\varGamma$ is strongly elliptic in $H'(\varGamma)$.
\end{theorem}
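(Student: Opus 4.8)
The plan is to prove strong ellipticity by establishing a G\aa{}rding inequality for $V_\varGamma$, that is, to find a compact operator $K:H'(\varGamma)\to H(\varGamma)$ and a constant $c>0$ such that
\begin{equation*}
\big\langle V_\varGamma q + Kq,\, q\big\rangle \geqslant c\,\|q\|_{H'(\varGamma)}^2\qquad\text{for all }q\in H'(\varGamma),
\end{equation*}
where the duality pairing is between $H(\varGamma)$ and $H'(\varGamma)$. The natural starting point is the energy identity: since $\mathscr S_\varGamma q$ solves the biharmonic equation off $\varGamma$ and lies in $W^2(\mathbb R^2)$, integration by parts (in the spirit of the third point of Lemma~\ref{gtqpllkj}) should rewrite the pairing $\langle V_\varGamma q, q\rangle = \langle \gamma_\varGamma \mathscr S_\varGamma q, q\rangle$ as the bulk Dirichlet-type energy $\|\Delta \mathscr S_\varGamma q\|_{L^2(\mathbb R^2)}^2$ (possibly up to the leading far-field terms), which is manifestly nonnegative. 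The subtlety, and the reason the Robin matrix enters, is that this energy only controls $q$ modulo the three-dimensional space associated with the affine functions / the growth modes carried by $G$; the positivity of $\Lambda_\varGammae$ is precisely what is needed to recover coercivity on that finite-dimensional obstruction.

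Concretely, I would first use the asymptotic expansion \eqref{asymp_SLP} to handle the unbounded domain: the vector $A_\varGamma(q)$ governs the non-decaying part of $\Delta \mathscr S_\varGamma q$, and a careful integration by parts out to a large circle $\mathcal C_R$ (letting $R\to+\infty$, as done repeatedly in the proof of Theorem~\ref{main:theo2}) should produce an identity of the schematic form
\begin{equation*}
\big\langle V_\varGamma q,\, q\big\rangle = \|\Delta \mathscr S_\varGamma q\|_{L^2(\mathbb R^2\setminus\varGamma)}^2 + A_\varGamma(q)\cdot \Lambda_\varGammae\, A_\varGamma(q) + (\text{lower-order terms}).
\end{equation*}
The first term is a nonnegative quadratic form that is coercive on the complement of the finite-rank obstruction (it vanishes exactly when $\mathscr S_\varGamma q$ is affine, i.e.\ when the full single-layer potential reduces to its growth part), and the second term is, by hypothesis, a positive definite form in the three numbers $A_\varGamma(q)$. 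I would then argue that these two forms together dominate a full norm up to compact perturbation: the map $q\mapsto A_\varGamma(q)$ is continuous and finite-rank, so replacing the genuine $H'(\varGamma)$-norm by the energy expression costs only a compact operator.

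The main obstacle will be the rigorous bookkeeping of the boundary terms at infinity and their identification with the Robin quadratic form $A_\varGamma(q)\cdot\Lambda_\varGammae A_\varGamma(q)$. The functions $G_j$ grow (logarithmically or linearly-times-logarithmically), so the integrals over $\mathcal C_R$ do not individually converge, and one must track cancellations exactly as in equations \eqref{wxpghyi}--\eqref{ppppolp}, using the auxiliary functions $G_k^\dagger=\nu_k(R)\omega_k+\lambda_k(R)X_k$ to isolate the finite limit. The key technical point is that the finite part surviving as $R\to+\infty$ is precisely $\Lambda_\varGammae$ evaluated on $A_\varGamma(q)$, which is where the hypothesis of positive definiteness is consumed. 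Once the G\aa{}rding inequality is in hand, strong ellipticity follows from the fact that $V_\varGamma$ is self-adjoint together with its invertibility (guaranteed by $\det\Lambda_\varGammae\neq 0$, which positive definiteness certainly implies, via Theorem~\ref{main:theo2}): a self-adjoint operator satisfying a G\aa{}rding inequality and having trivial kernel is an isomorphism with a positive definite principal part, which is the assertion of strong ellipticity.
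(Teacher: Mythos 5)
Your overall intuition is right --- the quadratic form of $V_\varGamma$ splits into a nonnegative bulk energy plus the Robin quadratic form acting on the three growth modes --- but the central identity of your plan cannot hold as written, and this is a genuine gap rather than mere bookkeeping. The term $\|\Delta\mathscr S_\varGamma q\|^2_{L^2(\mathbb R^2\setminus\varGamma)}$ is \emph{infinite} whenever $A_\varGamma(q)\neq 0$: by \eqref{asymp_SLP3}, $\Delta\mathscr S_\varGamma q(x)=A_\varGamma(q)\cdot\omega(x)+\mathscr O(1/|x|^2)$, and neither $\omega_0(x)\sim\ln|x|$ nor $\omega_j(x)\sim 1/|x|$ is square-integrable near infinity. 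So there is no finite ``bulk energy plus lower-order terms'' to extract from $\mathscr S_\varGamma q$ itself; the renormalization you are missing is to replace $\mathscr S_\varGamma q$ by the minimal-energy extension $\mathsf S_\varGamma p$ of its total trace $p=V_\varGamma q$, which does lie in $W^2(\mathbb R^2)$ (in $\varOmega_\varGamma^+$ it differs from $\mathscr S_\varGamma q$ exactly by the growing part $A_\varGamma(q)\cdot G$, up to an affine function; see the proof of Theorem~\ref{main:theo2}). This is what the paper does, and in a cleaner way than a G\aa rding inequality: it equips $H(\varGamma)$ with the inner product $(p,q)_\varGamma=(\Delta\mathsf S_\varGamma p,\Delta\mathsf S_\varGamma q)_{L^2(\mathbb R^2)}+[G,p_e]_\varGammae\cdot\Lambda_\varGammae^{-1}[G,q_e]_\varGammae$ --- positive because $\Lambda_\varGammae$ is positive definite, and equivalent to the usual norm because $\|\Delta\mathsf S_\varGamma p\|_{L^2}$ degenerates only on the three-dimensional space of affine traces, on which $[G,\cdot]_\varGammae$ acts as the identity by Lemma~\ref{wppfrqe} --- and identifies $U_\varGamma=V_\varGamma^{-1}$ with the Riesz map of this inner product in the Gelfand triple $H(\varGamma)\subset\mathbf L^2(\varGamma)\subset H'(\varGamma)$. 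This gives the exact identity $\langle U_\varGamma p,p\rangle=\|p\|_\varGamma^2$ and hence coercivity of $V_\varGamma$ on $H'(\varGamma)$ with no compact remainder at all.

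Two further logical points. First, your closing step is circular: a G\aa rding inequality \emph{is} the assertion of strong ellipticity; it does not follow from ``self-adjoint isomorphism with trivial kernel'', and invertibility alone can never yield positivity (consider $-V_\varGamma$). Second, even with the corrected energy identity you would still need the invertibility of $V_\varGamma$ (Theorem~\ref{main:theo2}, available here since $\det\Lambda_\varGammae\neq 0$) to convert a lower bound expressed in terms of $\|V_\varGamma q\|_{H(\varGamma)}$ into one in terms of $\|q\|_{H'(\varGamma)}$; in the paper this step is exactly the statement that $U_\varGamma$ is an isometry for $(\cdot,\cdot)_\varGamma$.
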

%====================================================
\begin{proof}
Assume that the matrix $\Lambda_\varGammae$ is positive definite  and 
define in $H(\varGamma)$ the inner product:
\begin{equation}
\label{prod_scal_2}
(p,q)_{\varGamma}=\big(\Delta\mathsf S_\varGamma p,\Delta\mathsf S_\varGamma q\big)_{L^2(\mathbb R^2)}+
\big[G,  p_e\big]_\varGammae\cdot \Lambda_\varGammae^{-1}\big[G,  q_e\big]_\varGammae\forallt p,q\in H(\varGamma).
\end{equation}
The  norm $\|\cdot\|_\varGamma$ associated 
to this scalar product is clearly equivalent to the usual norm on $H(\varGamma)$.
The inclusion $H(\varGamma)\subset L^2(\varGamma)\times L^2(\varGamma)$ being continuous and dense, we can use $
\mathbf L^2(\varGamma)=L^2(\varGamma)\times L^2(\varGamma)$ as pivot space (identified with its dual) and  obtain a Gelfand triple
of Hilbert spaces:
$$H(\varGamma)\subset \mathbf L^2(\varGamma)\subset 
 H'(\varGamma).$$
With this configuration, it is well known that the operator $H(\varGamma)\longrightarrow H'(\varGamma)$, 
$p\longmapsto ( p,\cdot)_{\varGamma}$
is an isometry and elementary calculations can be used to check that it is equal to $U_\varGamma$. We have therefore:
$$\langle U_\varGamma p,p\rangle=\|p\|^2_{\varGamma}\forallt p \in H(\varGamma),$$%
where the brackets $\langle\cdot,\cdot\rangle$ stands for the duality pairing on $H'(\varGamma)\times H(\varGamma)$ that extends the 
inner product of $\mathbf L^2(\varGamma)$. Since $V_\varGamma$ is the inverse of the isometric operator $U_\varGamma$, 
we get the result.
\end{proof}
%====================================================
\section{Properties of  the Robin matrix}
Since the Robin matrix depends only on $\varGammae$, we assume again in this section (as in Section~\ref{sec:3}) that the multi-connected 
curve $\varGamma$ is such that $\varGamma=\varGammae$. Let $\varGamma'$ be another such a curve
 and denote by $\lambda_\varGamma^1\leqslant \lambda_\varGamma^2\leqslant \lambda_\varGamma^3$  the eigenvalues of $\Lambda_\varGamma$
and by $\lambda_{\varGamma'}^1\leqslant \lambda_{\varGamma'}^2\leqslant \lambda_{\varGamma'}^3$ those of $\Lambda_{\varGamma'}$.
%-------------------------------------------------------------------------------------------
\begin{theorem}
\label{the:2}
If $\varOmega_{\varGamma}^+\subset \varOmega_{\varGamma'}^+$ then $\lambda_\varGamma^j\leqslant \lambda_{\varGamma'}^j$ for $j=1,2,3$.
\end{theorem}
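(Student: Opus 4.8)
\emph{Proof proposal.} The plan is to reduce the eigenvalue comparison to a comparison of quadratic forms and then to exhibit the difference of the two forms as a genuine, nonnegative energy. By the Courant--Fischer min--max principle, the inequalities $\lambda_\varGamma^j\leqslant\lambda_{\varGamma'}^j$ for $j=1,2,3$ follow from Weyl's monotonicity theorem provided one shows that $\Lambda_{\varGamma'}-\Lambda_\varGamma$ is positive semidefinite, i.e. that $\xi^\top\Lambda_\varGamma\xi\leqslant\xi^\top\Lambda_{\varGamma'}\xi$ for every $\xi\in\mathbb R^3$. Fixing such a $\xi$ and setting $g=G\cdot\xi=\sum_j\xi_jG_j$ (up to the translation placing the origin inside $\varGamma'$, hence inside $\varGamma$, the function $g$ is biharmonic in $\overline{\varOmega_\varGamma^+}$, its only singularity sitting at the origin in $\varOmega_{\varGamma'}^-\subset\varOmega_\varGamma^-$), the bilinearity of the bracket in both of its arguments gives $\xi^\top\Lambda_\varGamma\xi=[g,\gamma_\varGamma g]_\varGamma$ and, likewise, $\xi^\top\Lambda_{\varGamma'}\xi=[g,\gamma_{\varGamma'}g]_{\varGamma'}$.

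The hypothesis $\varOmega_{\varGamma}^+\subset \varOmega_{\varGamma'}^+$ means that $\varGamma'$ is enclosed by $\varGamma$, so $\varGamma'$ plays the role of the inner curve $\C$ in the second point of Lemma~\ref{wppfrqe}. Applying that point with $p=\gamma_\varGamma g$ transports the form attached to the outer curve onto the inner one, $\xi^\top\Lambda_\varGamma\xi=[g,\gamma_\varGamma g]_\varGamma=[g,\gamma_{\varGamma'}\mathsf S_\varGamma\gamma_\varGamma g]_{\varGamma'}$. Since $\mathsf S_{\varGamma'}\gamma_{\varGamma'}g$ has the same total trace on $\varGamma'$ as $g$, we also have $\xi^\top\Lambda_{\varGamma'}\xi=[g,\gamma_{\varGamma'}\mathsf S_{\varGamma'}\gamma_{\varGamma'}g]_{\varGamma'}$; subtracting and writing $h:=\mathsf S_{\varGamma'}\gamma_{\varGamma'}g-\mathsf S_\varGamma\gamma_\varGamma g$, the difference collapses to a single bracket on the common curve $\varGamma'$,
\[
\xi^\top\big(\Lambda_{\varGamma'}-\Lambda_\varGamma\big)\xi=\big[g,\gamma_{\varGamma'}h\big]_{\varGamma'}.
\]
The decisive structural gain is that $h$ is the difference of two functions of $W^2(\mathbb R^2)$, hence itself lies in $W^2(\mathbb R^2)$: the logarithmically growing, $\kappa_0,\kappa_1$--dependent part of $g$ has been cancelled, so that $\Delta h\in L^2(\mathbb R^2)$.

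It then remains to show $[g,\gamma_{\varGamma'}h]_{\varGamma'}\geqslant 0$. The plan is to unfold the definition \eqref{croch} of the bracket and to use the integration-by-parts identity of the third point of Lemma~\ref{gtqpllkj} (together with the density of $\mathscr D(\mathbb R^2)$ in $W^2(\mathbb R^2)$) in the annular domain $D=\varOmega_{\varGamma'}^+\cap\varOmega_\varGamma^-$ and in $\varOmega_\varGamma^+$, turning the boundary integrals on $\varGamma'$ into a Dirichlet-type integral $\|\Delta h\|_{L^2}^2$ of manifestly nonnegative sign. I expect the main obstacle to be precisely this bookkeeping: one must carry along the auxiliary circle $\mathcal C_R$ used in the proofs of Lemma~\ref{wppfrqe}, check that the contributions on $\mathcal C_R$ (which individually diverge, since $\Delta g\sim(\xi_0/2\pi)\ln|x|$) cancel in the difference of the two forms, and verify that the surviving far-field terms vanish as $R\to+\infty$ thanks to the decay $\Delta\mathsf S_\varGamma p=\mathscr O(1/|x|)$ of the last point of Lemma~\ref{gtqpllkj}. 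Once the identity $\xi^\top(\Lambda_{\varGamma'}-\Lambda_\varGamma)\xi=\|\Delta h\|_{L^2(\mathbb R^2)}^2$ (or an equivalent nonnegative energy over $D$) is established, positive semidefiniteness of $\Lambda_{\varGamma'}-\Lambda_\varGamma$ is immediate and Weyl's theorem concludes the proof.
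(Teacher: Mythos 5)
Your reduction of the theorem to the positive semidefiniteness of $\Lambda_{\varGamma'}-\Lambda_\varGamma$, via the quadratic form $\xi^\top\Lambda_\varGamma\xi=[\xi\cdot G,\gamma_\varGamma(\xi\cdot G)]_\varGamma$ and the min--max principle, is exactly the paper's strategy. The execution, however, breaks at the step that is supposed to put the two forms on a common curve. The second point of Lemma~\ref{wppfrqe} requires $\varOmega_\C^+\subset\varOmega_\varGamma^+$, i.e.\ $\C$ \emph{encloses} the base curve (in the paper it is always a large circle $\mathcal C_R$ around $\varGamma$): the bracket can only be transported \emph{outward}. Under the hypothesis $\varOmega_\varGamma^+\subset\varOmega_{\varGamma'}^+$ the admissible application is $[g,\gamma_{\varGamma'}g]_{\varGamma'}=[g,\gamma_{\varGamma}\circ\mathsf S_{\varGamma'}\gamma_{\varGamma'}g]_{\varGamma}$, not the identity $[g,\gamma_\varGamma g]_\varGamma=[g,\gamma_{\varGamma'}\circ\mathsf S_\varGamma\gamma_\varGamma g]_{\varGamma'}$ that you write. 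The latter is not only outside the scope of the lemma but false in general: the lemma's proof rests on $\mathsf S_{\C}\circ\gamma_{\C}\circ\mathsf S_\varGamma p=\mathsf S_\varGamma p$ in $\varOmega_\C^+$ (fourth point of Lemma~\ref{gtqpllkj}), which fails when $\C=\varGamma'$ is the inner curve, because the biharmonic ``sources'' of $\mathsf S_\varGamma p$ sit on $\varGamma\subset\varOmega_{\varGamma'}^+$.

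The second gap is that the decisive positivity is deferred to bookkeeping you do not carry out, and the target identity you aim at, $\xi^\top(\Lambda_{\varGamma'}-\Lambda_\varGamma)\xi=\|\Delta h\|^2$ with $h=\mathsf S_{\varGamma'}\gamma_{\varGamma'}g-\mathsf S_\varGamma\gamma_\varGamma g$, is not the correct one. The paper's mechanism is different and needs no circle at infinity and no cancellation of divergent terms: with $F=\xi\cdot G$ and $\mathscr F_{\varGamma'}=\xi\cdot\mathscr G_{\varGamma'}$, one expands $[F,\gamma_{\varGamma'}F]_{\varGamma'}$ as a boundary functional of $F$ alone minus the minimal energy $\min\{\|\Delta u\|^2_{L^2(\varOmega_{\varGamma'}^+)}:\gamma_{\varGamma'}u=F\}$; the boundary functional is moved from $\varGamma'$ to $\varGamma$ by an integration by parts over the annular region $D$ between the two curves, producing the term $+\int_D|\Delta F|^2$; and the $\varGamma'$-minimum is dominated by $\int_D|\Delta F|^2$ plus the $\varGamma$-minimum by inserting the competitor equal to $F$ on $D$ and to $\mathsf S_\varGamma\gamma_\varGamma F$ on $\varOmega_\varGamma^+$ (admissible since the total traces match on $\varGamma$). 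This variational comparison is precisely what your plan is missing; and if one insists on writing the gap as a single nonnegative energy, the relevant difference function equals $F-\mathsf S_{\varGamma'}\gamma_{\varGamma'}F$ on $D$ (where $\mathsf S_\varGamma\gamma_\varGamma F\neq F$ in general), not your $h$.
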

%-------------------------------------------------------------------------------------------
\begin{proof}
Let $\xi$ be in $\mathbb R^3$ and define $F=\xi\cdot G$ and $\mathscr F_{\varGamma'}=\xi\cdot\mathscr G_{\varGamma'}$. 
Then $\xi\cdot \Lambda_{\varGamma'}\xi=
\big[F,\gamma_{\varGamma'}F\big]_{\varGamma'}$ and proceeding as for establishing the equality \eqref{abb}, we obtain:
$$\big[F,\gamma_{\varGamma'}F\big]_{\varGamma'}=-\int_{\varGamma'}\big(\partial_n\Delta 
{\mathscr F}^+_{\varGamma'}\big)F\ds+\int_{\varGamma'}\big(\Delta 
{\mathscr F}^+_{\varGamma'}\big)(\partial_n F)\ds.$$
 From \eqref{defGbi} and recalling \eqref{def_SP} we deduce that the above equality can be transformed into:
$$\big[F,\gamma_{\varGamma'}F\big]_{\varGamma'}=-\int_{\varGamma'}\big(\partial_n\Delta 
F\big)F\ds+\int_{\varGamma'}\big(\Delta 
F\big)(\partial_n F)\ds-\min\big\{\|\Delta u\|^2_{L^2(\varOmega_{\varGamma'}^+)}\,:\,u\in W^2(\mathbb R^2),\,\gamma_{\varGamma'}u=F\big\}.$$
Denote by $D$ the bounded domain between $\varGamma'$ and $\varGamma$. On the one hand:
$$-\int_{\varGamma'}\big(\partial_n\Delta 
F \big)F \ds+\int_{\varGamma'}\big(\Delta 
F \big)(\partial_nF )\ds=-\int_{\varGamma}\big(\partial_n\Delta 
F \big)F \ds+\int_{\varGamma}\big(\Delta 
F \big)(\partial_nF )\ds+\int_D|\Delta F |^2\dx.$$
On the other hand:
$$\min\big\{\|\Delta u\|^2_{L^2(\varOmega_{\varGamma'}^+)}\,:\,u\in W^2(\mathbb R^2),\,\gamma_{\varGamma'}u=F\big\}\leqslant 
\int_D|\Delta F|^2\dx+\min\big\{\|\Delta u\|^2_{L^2(\varOmega_\varGamma^+)}\,:\,u\in W^2(\mathbb R^2),\,\gamma_{\varGamma}u=F\big\},$$
and thus we have proved that $\big[F ,\gamma_{\varGamma'}F \big]_{\varGamma'}\geqslant \big[F ,\gamma_{\varGamma}F \big]_{\varGamma}$. The Courant-Fischer min-max principle leads to the conclusion of the theorem.
\end{proof}
%-------------------------------------------------------------------------------------------
\begin{prop}
\label{prop:1}
Let $\mathcal C_R$ be a circle of radius $R>0$. Then $\Lambda_{\mathcal C_R}={\rm diag}(\lambda_0(R),\lambda_1(R),\lambda_2(R))$
where the expressions of the real numbers $\lambda_j(R)$ ($j=0,1,2$) are given in \eqref{def_mu_lam}.
\end{prop}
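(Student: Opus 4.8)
The plan is to place the origin at the centre of $\mathcal C_R$ (compatible with the standing assumption that the origin lies in $\varOmega^-_\varGamma$, here with $\varGamma=\mathcal C_R$) and to reduce the computation of $\Lambda_{\mathcal C_R}$ to the trace identity already available. Recall from the proof of Theorem~\ref{main:theo2} that, on a circle of radius $R$ centred at the origin, $\gamma_{\mathcal C_R}G_k=\gamma_{\mathcal C_R}\big(\nu_k(R)\,\omega_k+\lambda_k(R)\,X_k\big)$, with $\nu_k(R),\lambda_k(R)$ as in \eqref{def_mu_lam}. Since a density $p$ enters the bracket \eqref{croch} only through the linear operator $p\mapsto\mathsf S_{\mathcal C_R}p$, the pairing $[\,G_j,\cdot\,]_{\mathcal C_R}$ is linear in its second argument, so I would first write
$$\Lambda_{jk}=\big[G_j,\gamma_{\mathcal C_R}G_k\big]_{\mathcal C_R}=\nu_k(R)\,\big[G_j,\gamma_{\mathcal C_R}\omega_k\big]_{\mathcal C_R}+\lambda_k(R)\,\big[G_j,\gamma_{\mathcal C_R}X_k\big]_{\mathcal C_R}.$$
By the third point of Lemma~\ref{wppfrqe} the last bracket is $\delta_{jk}$, so the whole statement reduces to the claim $\big[G_j,\gamma_{\mathcal C_R}\omega_k\big]_{\mathcal C_R}=0$ for all $j,k=0,1,2$.

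The crucial step is to show that $\mathsf S_{\mathcal C_R}(\gamma_{\mathcal C_R}\omega_k)$ equals $\omega_k$ on $\varOmega^+_{\mathcal C_R}$. Each $\omega_k$ is harmonic off the origin, and its restriction to the exterior lies in $W^2(\mathbb R^2)$: this is checked from the decay $\omega_0=\mathscr O(\ln|x|)$ and $\omega_{1},\omega_{2}=\mathscr O(1/|x|)$ against the three integrability conditions defining the weighted space. Completing $\omega_k$ inside the disk by the (unique) biharmonic function with Cauchy data $\gamma_{\mathcal C_R}\omega_k$ yields a function in $W^2(\mathbb R^2)$ that is biharmonic in $\mathbb R^2\setminus\mathcal C_R$ and has total trace $\gamma_{\mathcal C_R}\omega_k$; by the uniqueness asserted in the second point of Lemma~\ref{gtqpllkj} it must coincide with $\mathsf S_{\mathcal C_R}(\gamma_{\mathcal C_R}\omega_k)$. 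In particular $\big(\mathsf S_{\mathcal C_R}\gamma_{\mathcal C_R}\omega_k\big)^+=\omega_k$, hence $\Delta\big(\mathsf S_{\mathcal C_R}\gamma_{\mathcal C_R}\omega_k\big)^+=0$.

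With this I would then evaluate \eqref{croch} directly, using $\Delta G_j^+=\omega_j$ and $\gamma_{\mathcal C_R}\mathsf S_{\mathcal C_R}\gamma_{\mathcal C_R}\omega_k=\gamma_{\mathcal C_R}\omega_k$. The two terms containing $\Delta\big(\mathsf S_{\mathcal C_R}\gamma_{\mathcal C_R}\omega_k\big)^+$ drop out by the previous step, while the remaining two collapse to the Green-type boundary term $\int_{\mathcal C_R}\big(\omega_j\,\partial_n\omega_k-\omega_k\,\partial_n\omega_j\big)\ds$. This integral vanishes: for $j=k$ its integrand is identically zero, and for $j\neq k$ it vanishes by orthogonality of the angular profiles on the circle, as $\omega_0$ is radial whereas $\omega_1$ and $\omega_2$ carry the modes $\cos\theta$ and $\sin\theta$. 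Hence $\big[G_j,\gamma_{\mathcal C_R}\omega_k\big]_{\mathcal C_R}=0$, which gives $\Lambda_{jk}=\lambda_k(R)\,\delta_{jk}$, i.e.\ $\Lambda_{\mathcal C_R}={\rm diag}(\lambda_0(R),\lambda_1(R),\lambda_2(R))$.

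I expect the main obstacle to be the identification $\big(\mathsf S_{\mathcal C_R}\gamma_{\mathcal C_R}\omega_k\big)^+=\omega_k$: the harmonicity of the exterior part is exactly what makes two of the four bracket terms disappear, and it is most cleanly obtained not from a direct exterior estimate but by exhibiting $\omega_k$ (suitably completed inside the disk) as a $W^2$ solution of the transmission problem and invoking its uniqueness. The only genuinely computational ingredients are the membership $\omega_k\in W^2$ near infinity and the trace coefficients $\nu_k,\lambda_k$ of \eqref{def_mu_lam}, both elementary.
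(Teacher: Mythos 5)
Your proof is correct and follows essentially the same route as the paper: both hinge on identifying $\mathsf S_{\mathcal C_R}$ of the boundary data explicitly as a function with harmonic exterior part (the paper writes $\mathsf S_{\mathcal C_R}\circ\gamma_{\mathcal C_R}G_k=\nu_k(R)\omega_k+\lambda_k(R)X_k$ in $\varOmega^+_{\mathcal C_R}$, which is your identification of $\mathsf S_{\mathcal C_R}(\gamma_{\mathcal C_R}\omega_k)^+=\omega_k$ combined with the fact that $\mathsf S$ of an affine trace is the affine function itself), after which two of the four bracket terms vanish and the remaining circle integrals are elementary. Your use of linearity together with the third point of Lemma~\ref{wppfrqe} to dispose of the $X_k$ contribution is only a minor reorganization of the paper's direct computation.
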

\begin{proof}For $k=0,1,2$ we have $\mathsf S_{\mathcal C_R}\circ\gamma_{\mathcal C_R} G_k=\nu_k(R)\omega_j
+\lambda_j(R)X_j$ in $\varOmega_\varGamma^+$, where the expressions 
of the constants $\nu_k(R)$ and $\lambda_k(R)$ are given in \eqref{def_mu_lam}. 
The result follows from the definition of the entries $\big[G_k,\gamma_\varGamma G_j\big]_{\mathcal C_R}$ of the matrix 
$\Lambda_{\mathcal C_R}$.
\end{proof}
%-------------------------------------------------------------------------------------------
\section{Generalization}
The $\mathcal C^{1,1}$ regularity of $\varGamma$ is the weakest for which the total trace operator from $H^2_{\ell oc}(\mathbb R^2)$ 
into $H^{3/2}(\varGamma)\times H^{1/2}(\varGamma)$ is well defined and onto. With weaker regularity, $H^{3/2}(\varGamma)$ has no more intrinsic definition and the space $H(\varGamma)$ must be defined simply as the image of $H^2_{\ell oc}(\mathbb R^2)$ by the total trace operator. 
In general, this image is difficult to characterize (see \cite{Lamberti:2020aa, Lamberti:2022aa}). Nevertheless, some cases are dealt with in the literature: curvilinear $\mathcal C^{1,1}$ polygons in \cite{Grisvard:1985aa} and Lipschitz continuous curves in \cite{Geymonat:2007wx}. 
For $\mathcal C^{1,1}$ curvilinear polygons, 
a generalization of the work done in this document seems well within reach.
\par
Another possible generalization would be to adopt the approach of  \cite{Costabel:1996aa}, in which $\varGamma$ 
is any compact set in $\mathbb R^2$. In this case, the spaces $H(\varGamma)$ and $H'(\varGamma)$ would be replaced by 
the spaces denoted respectively by $H_\gamma^2(\varGamma)$ and $H^{-2}_\varGamma$ in \cite{Costabel:1996aa}.
The space $W^2(\mathbb R^2)$ would obviously remain unchanged and $W^2_\varGamma(\mathbb R^2)$ would 
be defined as the closure of $\mathscr D(\mathbb R^2\setminus\varGamma)$ in $W^2(\mathbb R^2)$. With these settings, the domain $\varOmega_\varGamma^+$ 
is the unbounded connected component of $\mathbb R^2\setminus\varGamma$ and $\varGammae$ is the boundary of 
$\varOmega_\varGamma^+$. The details of the analysis still need to be verified, and this will be done in a future work.
%====================================================
\appendix
\section{Statements and Declarations}
\subsection*{Funding}
No funds, grants, or other support was received. The author has no relevant financial or non-financial interests to disclose.
\subsection*{Data availability}
Data sharing not applicable to this article as no datasets were generated or analysed during the current study.
%====================================================

%====================================================
\end{document}